\documentclass[10pt,reqno]{amsart}

\usepackage[utf8]{inputenc} 



\usepackage[margin=1in]{geometry} 


\usepackage{graphicx} 
\usepackage{float} 

\usepackage{changepage}

 \usepackage[parfill]{parskip} 
 
\usepackage{booktabs} 
\usepackage{array} 
\usepackage{paralist} 
\usepackage{verbatim} 
\usepackage{subfig} 
\usepackage{mathrsfs}
\usepackage{amssymb}
\usepackage{xcolor}
\usepackage{amsthm}
\usepackage{amsmath,amsfonts,amssymb,esint,hyperref}
\usepackage[noabbrev, capitalize]{cleveref}

\usepackage{autonum}

\usepackage{graphics,color}
\usepackage{enumerate, enumitem}
\usepackage{mathtools,centernot}
\usepackage{cases}
\usepackage{amsrefs}
\usepackage{bbm}
\usepackage{xfrac}



\pagestyle{plain} 

\bibliographystyle{alphabetic}


\usepackage{bookmark}

\newtheorem{theorem}{Theorem}[section]
\newtheorem{lemma}[theorem]{Lemma}

\newtheorem{definition}[theorem]{Definition}
\newtheorem{proposition}[theorem]{Proposition}
\newtheorem{remark}[theorem]{Remark}

\numberwithin{equation}{section} 

\newcommand{\norm}[1]{\left\|#1\right\|}
\newcommand{\abs}[1]{\left|#1\right|}

\newcommand*{\R}{\ensuremath{\mathbb{R}}}

\renewcommand*{\S}{\ensuremath{\mathcal{S}}}

\newcommand*{\tr}{\ensuremath{\mathrm{tr\,}}}

\renewcommand{\phi}{\varphi}
\newcommand{\J}{\mathcal{J}}

\renewcommand{\MR}[1]{} 

\usepackage{color, graphicx}
\usepackage{mathrsfs, dsfont}

\usepackage[]{hyperref}
\hypersetup{
    colorlinks=true,       
    linkcolor=red,          
    citecolor=blue,        
    filecolor=red,      
    urlcolor=cyan           
}

\def\div{\mathop{\rm div}\nolimits}    
\def\curl{\mathop{\rm curl}\nolimits}    
 

\def\tr{\mathop{\rm Tr}\nolimits}


\newcommand{\be}{\begin{equation}}
\newcommand{\ee}{\end{equation}}
\newcommand{\D}{\mathrm{D}}

\title{Fine dissipative properties of Euler solutions with measure first derivatives}

\author[M. Inversi]{Marco Inversi}
\address[M. Inversi]{Max Planck Institute for Mathematics in the Sciences, Inselstrasse 22, 04103, Leipzig, Germany.}
\email{marco.inversi@mis.mpg.de}

\date{\today}

\subjclass[2020]{35Q31, 35D30, 26A45}
\keywords{Incompressible Euler, energy conservation, vortex sheets, BV functions, BD fields. }

\begin{document}

\begin{abstract}
We study fine properties of bounded weak solutions to the incompressible Euler equations whose first derivatives, or only some combinations of them, are Radon measures. As consequences we obtain elementary proofs of the local energy conservation for solutions in BV and BD, without relying on the freedom in choosing the convolution kernel appearing in the approximation of the dissipation. The argument heavily exploits the form of the Euler nonlinearity and it does not apply to the linear transport equations, where the renormalization property for BD vector fields is an open problem. 
The methods also yield nontrivial conclusions when only the vorticity is assumed to be a measure. 
\end{abstract}

\maketitle

\section{Introduction} 

The motion of a homogeneous, incompressible, ideal fluid under the action of an external force field $f$ is governed by the incompressible Euler equations
\begin{equation} \label{E} \tag{E}
    \begin{cases}
    \partial_t u + \div (u \otimes u) + \nabla p =f  
    \\ \div (u) = 0
    \end{cases} \qquad \text{in } \Omega \times (0,T).
\end{equation}
The system is settled in $\Omega \times (0,T)$, where $\Omega \subset \R^d$ is an open set. Since we focus on the analysis in the interior of the domain, the spatial boundary conditions and the initial datum are not specified. Here, $u$ is the velocity field of the fluid and $p$ is the hydrodynamic pressure. From now on, we will tacitly assume that an open set $\Omega \subset \R^d$ and a time interval are fixed and $u \in L^\infty(\Omega \times (0,T))$ (in short $u \in L^\infty_{x,t}$), $ f, p \in L^{1}_{x,t}$. Weak solutions to \eqref{E} satisfy the local energy balance
\begin{equation} \label{eq: LEB}
    \partial_t \frac{\abs{u}^2}{2} + \div \left( u \left( \frac{\abs{u}^2}{2} + p \right) \right) = -\D + f \cdot u \qquad \text{in } \mathcal{D}'(\Omega \times (0,T)), 
\end{equation}
up to a possibly non-trivial defect distribution $\D$, named after Duchon and Robert \cite{DR00}. Smooth solutions to \eqref{E} satisfy the exact energy balance, i.e. \eqref{eq: LEB} holds with $\D \equiv 0$.  Since the celebrated works of Kolmogorov \cite{K41} and
Onsager \cite{O49}, it has been clear that a good understanding of incompressible
turbulence is subject to the study of Euler solutions violating energy conservation. The results established in this note are related to the celebrated Onsager conjecture in critical regimes and the analysis of vortex sheets type solutions. More precisely, we study the dissipation associated with bounded solutions to \eqref{E} whose first derivatives, or only some combinations of them, are represented by Radon measures. See \cref{s: tools} for precise definitions. All the results and the computations described in this note are specific for the incompressible Euler system. To begin, we give a direct proof of the following result, recently obtained by the author and De Rosa in \cite{DRINV23}. 

\begin{theorem} \label{t:main intro 1}
Let $u \in L^\infty_{x,t} \cap L^1_t BV_x $ be a weak solution to \eqref{E} with $p,f \in L^1_{x,t}$. Then $\D \equiv 0$. 
\end{theorem}

The above theorem has been proved in \cite{DRINV23} by a careful analysis of the Duchon--Robert approximation, following the optimization procedure on the convolution kernel proposed by Ambrosio \cite{Ambr04} to establish the renormalization property for the transport equation with $BV$ vector fields. We propose a proof based only on the chain rule for divergence-free $BV$ vector fields. The computations are specific for the Euler nonlinearity. We postpone further discussions to \cref{ss: bounded BV} and \cref{s: direct proof}. 

Then, we consider the case of bounded deformation velocity fields and will prove the following. 

\begin{theorem} \label{thm: main BD}
Let $u \in L^\infty_{x,t} \cap L^1_t BD_x$ be a weak solution to \eqref{E} with $p, f \in L^1_{x,t}$. Then $\D \equiv 0$. 
\end{theorem}

The above result clearly extends \cref{t:main intro 1}, but the proof is not a technical modification of the previous. In the bounded deformation setting, the $BV$ chain rule and the Ambrosio's optimization procedure on the convolution kernels are not available. To the best of the author's knowledge, it is not clear how to borrow ideas from the linear setting, since the   renormalization property for the transport equation with $BD$ vector fields is currently open in full generality. See e.g. the discussion in \cite{Cr09}. Here, the argument exploits the fine properties of the blow up of divergence-free $BD$ vector fields, combined with the specific form of Euler nonlinearity. \cref{thm: main BD} also concludes the analysis started in \cite{DRInvN24}, since no further assumptions on the singular part of the symmetric gradient are needed. See \cref{ss: BD velocity} and \cref{s: velocity BD} for more details.  
 
In the last part of this note, we address the case of bounded Euler solutions with measure vorticity (or antisymmetric gradient). The analysis of the energy conservation in this class was initiated by Shvydkoy in \cite{Shv09} for classical vortex sheet solutions. In a nutshell, we will show that, whenever a (suitably defined) singular set is negligible with respect to the measure vorticity, then \eqref{eq: LEB} holds with $\D \equiv 0$. See \cref{t:main general} for a precise statement. Even in this case, it does not seem possible to draw inspiration from the linear theory, since the renormalization of the transport equation with measure vorticity is currently open in full generality. Again, the result and the computations heavily relies on the form of the Euler nonlinearity and the divergence-free condition. Further discussions are postponed to \cref{ss: generalized vortex sheets} and \cref{s: vortex sheets}.

\subsection{Plan of the paper}

The remainder of this paper is structured as follows. In \cref{s: background} we describe the relation of our results with the Onsager conjecture and the previous literature, as well as the main ideas behind our proofs. In \cref{s: tools} we recall some useful notation and basic tools. \cref{s: proofs} is devoted to proofs. 

\subsection{Acknowledgments} The author has been partially supported by the SNF grant FLUTURA: Fluids, Turbulence, Advection No. 212573. The author thanks Luigi De Rosa for a careful reading of a preliminary version of the current paper and for his constant interest in discussing/sharing mathematics. 

\section{Background and heuristics} \label{s: background}

\subsection{The Onsager conjecture} In 1949, Onsager proposed $\sfrac{1}{3}$ of H\"older regularity on the velocity field as a threshold for the conservation of kinetic energy \cite{O49}. This is known in the literature as the \emph{Onsager conjecture}. In the last decades, mathematical research has confirmed Onsager's prediction. Within the framework of Besov spaces, it is well-known that $\D \equiv 0$ whenever $u \in L^3 B^\theta_{3,\infty}$ for $\theta > \sfrac{1}{3}$ \cites{E94, CET94, DR00}. These classical proofs are based on the regularization of the equation by convolution. Due to the quadratic nature of the advective term in \eqref{E}, after taking the scalar product by the (possibly mollified) vector field, a trilinear error term appears. More precisely, in \cite{CET94} it is proved that $\D$ is the distributional limit of the sequence\footnote{The subscript index $\ell$ denotes the convolution with a mollifier $\rho_\ell = \ell^{-d} \rho\left( \sfrac{\cdot}{\ell}\right)$, where $\rho \in \mathcal{K}$ \eqref{eq: kernels} Here $\rho$ is smooth, even, with unit mass and supported in $B_1$. } 
\begin{equation} \label{eq: CET approx}
    \D_\ell^{CET}(x,t) := R^\ell(x,t) : \nabla u_\ell(x,t), \qquad \text{where} \quad R^\ell : = u_\ell \otimes u_\ell - (u \otimes u)_\ell. 
\end{equation}
By commutator estimates, then $\D^{CET}_\ell \to 0$ strongly in $L^1_{x,t}$ as soon as $u \in L^3 B^{\theta}_{3,\infty}$, for some $\theta > \sfrac{1}{3}$. In \cite{DR00}, Duchon and Robert obtained the same result writing $\D$ as the distributional limit of the sequence 
\begin{equation} \label{eq: DR approx}
    \D_\ell^{DR}(x,t) := \frac{1}{4 \ell} \int_{B_1} \nabla \rho(z) \cdot \delta_{\ell z} u(x,t) \abs{\delta_{\ell z} u(x,t)}^2 \, dz,  \qquad \text{where}\quad \delta_{\ell z} u(x,t) := u(x+\ell z, t)- u(x,t). 
\end{equation}
On the opposite side, several constructions of non-conservative (often truly dissipative) solutions are currently available. After Scheffer \cite{Sch93} and Shnirelman \cite{Shn00}, De Lellis and Sz\'ekelyhidi \cites{DS09, DS13} introduced convex integration methods in fluid dynamics to produce pathological solutions to \eqref{E}. Finally, the negative part of the Onsager conjecture has been settled by Isett \cite{Is18}, who constructed Euler solutions in $C^\theta$ with $\D \not\equiv 0$, for any $\theta < \sfrac{1}{3}$. See also \cites{NV22, GKN24_1, GKN24_2} for further developments. In summary, the Onsager conjecture is fully proved in both the supercritical and the subcritical regimes. 

To the best of the author's knowledge, the analysis in critical spaces such as $C^{\sfrac{1}{3}}$, as well as $B^{\sfrac{1}{3}}_{p, \infty}$ for any $p \in [3, +\infty)$,  is far from complete. Here we refer to \emph{Onsager critical class} as any function space which makes the approximate energy flux bounded\footnote{This is defined, for instance, by the Constantin--E--Titi approximation \eqref{eq: CET approx} or the Duchon--Robert one \eqref{eq: DR approx}.}, but possibly non vanishing. In these cases, the dissipation $\D$ is a space-time Radon measure, possibly non-trivial. On the other hand, convex integration encounters serious technical obstructions in  $B^{\sfrac{1}{3}}_{p, \infty}$ for $p \geq 3$ and, currently, the method fails. In \cites{CCFS08, DRInvN24} it was observed that in Onsager critical classes, $\D \equiv 0$ holds under the additional assumption that \emph{oscillations at small scales are small, when measured in a critical norm}\footnote{This is equivalent to consider the closure of smooth functions in critical classes, which is typically a proper subspace of the ambient space.}. See also \cite{DRDrIsInv25} for further properties of the dissipation $\D$ and its relation with intermittency in fluid dynamics.

\subsection{Bounded BV solutions} \label{ss: bounded BV}

To the author's knowledge, $L^\infty_{x,t} \cap L^1_t BV_x$ is the only critical class for which a complete characterization of the dissipation is currently available. By interpolation $BV \cap L^\infty$ embeds sharply in $B^{\sfrac{1}{3}}_{3,\infty}$\footnote{In fact, $BV \cap L^\infty$ embeds sharply in $B^{\sfrac{1}{p}}_{p, \infty}$ for any $p > 1$. Thus, the solutions in this class exhibit intermittency. }, which makes all the classical results \cites{CET94, DR00, CCFS08} unavailable in this setting. As already mentioned, in \cite{DRINV23} a first proof of \cref{t:main intro 1} appeared, based on the following heuristic argument. The Duchon--Robert approximation \eqref{eq: DR approx} makes the role of the convolution kernel $\rho$ apparent. Nevertheless, the limit distribution $\D$ is independent of the choice of $\rho$, which can be optimized. Following the seminal paper by Ambrosio \cite{Ambr04} on the renormalization of the transport equation with $BV$ vector fields, it is shown in \cite{DRINV23} that the defect measure $\D$ associated to an Euler solution in $L^\infty_{x,t} \cap L^1_t BV_x$ satisfies 
\begin{equation}
    \abs{\D} \lesssim \left( \inf_{\rho} \int_{B_1} \abs{\nabla \rho(z) \cdot ( M_{x,t \,} z) } \, dz \right) \lambda.  
\end{equation} 
Here $\lambda$ is a space-time Radon measure and, as a direct consequence of the incompressibility condition, $ M_{x,t} $ is a trace-free matrix $\lambda$-almost everywhere\footnote{The proof provides $\lambda = \abs{D u_t }\otimes dt$ and $M_{x,t}$ is the Radon-Nykodim derivative of $D u_t \otimes dt$ with respect to $\lambda$.}. The infimum is computed among all admissible convolution kernels and a careful computation by Alberti\footnote{In the original paper by Ambrosio \cite{Ambr04} the Rank One theorem for $BV$ functions is exploited. Later, the proof has been simplified \cite{Ambr04}*{Remark 3.7}. } \cites{Cr09, DRINV23} shows that it equals $\abs{\tr(M_{x,t})}$, thus proving $\D \equiv 0$.   

Some remarks are in order. Unlike the classical proofs, this argument shows \emph{weak} convergence of the approximate energy flux to $0$, opposite to \emph{strong} $L^1$. Moreover, the analysis in the critical space considered here makes a fine use of the divergence-free condition. It is clear that the proof must fail for compressible models, for instance when considering $1d$ Burgers shocks. Indeed, there exist bounded entropic solutions with $BV$ regularity (in space-time) and non-trivial defect measure. As a matter of fact, Burgers equation has a quadratic nonlinearity, which dictates the same critical classes for the conservation of energy as the incompressible Euler system. 

The proof outlined above is deeply inspired by the theory of renormalized solution for the transport equation. However, despite the highly non-trivial use of the divergence-free condition, the argument does not involve specific manipulations on the momentum equation. For instance, the fact that the Duchon--Robert approximation is trilinear with respect to the increment is simply neglected. In \cref{s: direct proof} we give a direct proof of \cref{t:main intro 1} based only on the chain rule for $BV$ functions. By the discussion above, the divergence-free condition must be used and it will force non-trivial cancellations to happen. The key point is that bounded divergence-free $BV$ vector fields satisfy
$$\div \left (u \frac{\abs{u}^2}{2}\right) = [ (u \cdot \nabla ) \, u ] \cdot u \qquad \text{in the sense of measures on } \R^d, $$
as in the smooth setting. In \cref{R: on the failure of the chain rule}, we will comment on the fact that the very same computations for Burgers shocks yield a non-trivial dissipation measure due to the jumps of the velocity.

\subsection{BD velocity fields} \label{ss: BD velocity}

The arguments in \cites{DR00, CET94} can be generalized to the case of bounded solutions with symmetric gradient in $L^1$ or continuous solutions with measure symmetric gradient (see e.g. \cite{DRInvN24}). Moreover, using  a radial kernel in \eqref{eq: DR approx}, in \cite{DRInvN24}*{Corollary 4.2} it is shown that for solutions in $L^\infty_{x,t} \cap L^1_t BD_x$, the dissipation is controlled by the singular part of the symmetric gradient of the velocity field. Then, under  additional assumptions on the geometric structure of the symmetric singular gradient, it is possible to conclude that $\D \equiv 0$ (see \cite{DRInvN24}*{Corollary 1.4}). In \cref{thm: main BD}, the unconditioned $BD$ case is addressed, concluding the analysis started in \cite{DRInvN24}. This also provides an energy conservation result in a critical class where only an assumption on the \emph{longitudinal structure function exponents} is made\footnote{Longitudinal and absolute structure function exponents are used in the theory of intermittency, see e.g. \cite{DRDrIsInv25} for further discussions. In view of \eqref{eq: char of BD}, $BD$ vector fields correspond to the first order \emph{longitudinal} structure function exponent $\zeta_1^{\parallel} = 1$, whereas $BV$ vector fields correspond to the first order \emph{absolute} structure function exponent $\zeta_1 = 1$ by \eqref{eq: char of BV}.}. 

In this case, it is not clear how to adapt known techniques from the linear setting, since the renormalization theory for the transport equation with $BD$ vector fields is open in full generality\footnote{For instance, the Alberti--Ambrosio optimization procedure is not available in this setting. Indeed, in order to exploit the $BD$ regularity, we are forced to use radial kernels, whereas the optimization proposed in \cite{Ambr04} requires one to use anisotropic kernels.}. See e.g. \cites{ACM05, Cr09} for partial results and further discussions. The proof of \cref{thm: main BD} is specific for Euler nonlinearity. A careful analysis of the blow up of $BD$ vector fields and the divergence-free condition are required. Given a radial convolution kernel $\rho$, after some manipulations of the Constantin--E--Titi formula \eqref{eq: CET approx}, we write 
\begin{equation}
    \D = \lim_{\ell \to 0} (R^\ell)_\ell : Eu, \qquad \text{where } R^\ell = u_\ell \otimes u_\ell - (u\otimes u)_\ell, \quad Eu = \frac{Du + (Du)^T}{2}.  
\end{equation}
For $BD$ velocity fields, the pointwise limit of $(R^\ell)_\ell$ can be explicitly computed $\abs{Eu}$-almost everywhere and it does not vanish only on the jump set $\J_u$ of the velocity field. Then, by the divergence-free condition and the characterization of the symmetric gradient, we will show that the dissipation does not give mass to $\J_u$. This suffices to conclude that $\D \equiv 0$.

\subsection{Solutions with measure vorticity} \label{ss: generalized vortex sheets}

In \cref{s: vortex sheets} we will consider bounded Euler solutions whose antisymmetric gradient is an arbitrary Radon measure, such as classical vortex sheets. These are Euler solutions with vorticity concentrated on a hypersurface of codimension $1$ that evolves over time \cite{Saf92}. To the best of the author's knowledge, the first energy conservation result for classical vortex sheets is due to Shvydkoy \cite{Shv09}. The energy flux associated with these solutions is a priori non-vanishing. However, being a solution to \eqref{E} forces the normal component of the velocity and the Bernoulli pressure to be continuous across the sheet. This suffices to prove that there is no energy dissipation in any space-time region. This heuristic has been recently adapted in \cite{DRInvN24} to prove that there is no possibility for energy gaps whenever singularities occur on a codimension $1$ rectifiable set, provided that both the velocity and the pressure have suitable traces.

In \cref{t:main general} we will study the dissipation measure associated with Euler solutions with measure antisymmetric gradient. Even for vector fields with this regularity, the linear theory does not provide many tools. The renormalization property for the transport equation is known for divergence-free vector fields with gradient given by a singular integral of a $L^1$ function \cite{BCr13} or in the stationary two-dimensional setting for measure vorticity vector fields \cite{BiGu16}. In both cases, the arguments are rather deep and they do not seem extendable to the Euler setting. 

Once more, the proof of \cref{t:main general} is specific for the Euler nonlinearity. The key point is to use the incompressibility condition to manipulate the momentum equation so that the role of the antisymmetric gradient is apparent. Under the non-trivial assumption that a (suitably defined) singular set of the velocity field is negligible with respect to the vorticity measure, we will show that  
\begin{equation}
    \partial_t u + \nabla P  = - (\Lambda u) \, u + f \qquad \text{where } P = \frac{\abs{u}^2}{2} + p, \quad \Lambda u = \nabla u - (\nabla u)^T. \label{eq: modified euler}
\end{equation}
Here, the product $(\Lambda u) u$ turns out to be a well-defined measure. Then, we formally take the scalar product by the velocity field $u$ and, again using the divergence-free condition to write $u \cdot \nabla P = \div (uP)$, we obtain
\begin{equation}
    \partial_t \frac{\abs{u}^2}{2} + \div (uP) = -[(\Lambda u)\, u] \cdot u + f\cdot u = -(\Lambda u) : u \otimes u + f\cdot u = f\cdot u 
\end{equation}
by a simple orthogonality argument. To make this argument rigorous, two mollification procedures will be needed. See \cref{s: vortex sheets} for details. This gives another proof of \cref{t:main intro 1} (see \cref{R: particular cases}). 

\section{Tools} \label{s: tools}

We introduce some useful notation that will be kept throughout the manuscript. Given a weak solution $(u,p)$ to \eqref{E} with force $f$, we will often consider the Bernoulli pressure 
$$P : = \frac{\abs{u}^2}{2} + p. $$
Given a vector field $u: \R^d \to \R^d$, we denote by $Du, E u, \Lambda u $ the distributional gradient, its symmetric, and (twice) its antisymmetric part, respectively. More precisely, we have 
$$ (E u)_{i,j} = \frac{[Du + (Du)^T]_{ij}}{2} = \frac{\partial_j u_i + \partial_i u_j}{2} \in \R^{d \times d}, \qquad  (\Lambda u)_{ij} : = [D u - (D u)^T]_{ij} = \partial_j u_i - \partial_i u_j \in \R^{d\times d}.  $$
For two dimensional flows, the antisymmetric part of the gradient is identified with the scalar vorticity
\begin{equation}
    \Lambda u = \left( \begin{matrix}
        0 & -\omega 
        \\ \omega & 0 
        \end{matrix}
        \right), \qquad \text{where } \omega = \partial_1 u_2 - \partial_2 u_1. 
\end{equation}
In particular, for any vector $v \in \R^2$ we have 
\begin{equation}
    (\Lambda u) \, v = \omega v^{\perp}, \qquad \text{where } v^{\perp} = \left( \begin{matrix}
        -v_2
        \\ v_1
    \end{matrix} \right). 
\end{equation}
In the three dimensional case, the antisymmetric part of the gradient is fully characterized by the vorticity $\omega = \curl(u)$, that is 
\begin{equation}
    \Lambda u = \left( \begin{matrix}
        0 & - \omega_3 & \omega_2 
        \\ \omega_3 & 0 & -\omega_1
        \\ - \omega_2 & \omega_1 & 0
        \end{matrix}
        \right), \qquad \text{where } \omega = \left( 
        \begin{matrix} 
        \partial_2 u_3 - \partial_3 u_2
        \\ -\partial_1 u_3 + \partial_3 u_1 
        \\ \partial_1 u_2 - \partial_2 u_1
        \end{matrix}
        \right).  
\end{equation}
Hence, for any vector $v \in \R^3$, we have
\begin{equation}
    (\Lambda u) \, v = \omega \times v. 
\end{equation}

Given an open set $\Omega \subset \R^d$, we denote by $\mathcal{M}(\Omega; \R^N)$ the set of Radon measures (i.e. locally finite Borel measures) on $\Omega$ with values in $\R^N$, endowed with the total variation norm $\norm{\mu}_{\mathcal{M}}$ (see e.g. \cite{EG15}). If the domain and the target space are clear from the context, we adopt the shorthand $\mathcal{M}(\Omega; \R^N) = \mathcal{M}_x$. Given a time interval $I$, we denote by $L^1 (I; \mathcal{M}(\Omega; \R^N)) = L^1_t \mathcal{M}_x$ (whenever $I, \Omega ,N$ are clear from the setting) the collection of weakly measurable curves of Radon measures on $\Omega$ with values in $\R^N$ such that 
$$\int_{I} \norm{\mu_t}_{\mathcal{M}} \, dt < \infty. $$

\subsection{Blow up and precise representative} \label{ss:jumps}

We recall the notion of blow up for $L^1$ functions. 

\begin{definition} \label{d: blow up}
Let $u \in L^1(\Omega; \R^m)$ and $x_0 \in \Omega$. We say that $u$ admits a blow up at $x_0$ if there exists $v \in L^1(B_1)$ such that\footnote{Here, it is crucial to consider $\ell \to 0$, without passing to subsequences.} 
\begin{equation} \label{eq: blow up}
    \lim_{\ell \to 0} \int_{B_1} \abs{u(x_0 + \ell z ) - v(z)} \, dz = 0. 
\end{equation} 
We denote by 
\begin{equation}
    \Omega \setminus \mathcal{N}_u : = \left\{ x_0 \in \Omega \, : \, u \text{ admits a blow up at $x_0$} \right\}.
\end{equation}
\end{definition}

By the above definition, it is clear that whenever $u$ admits a blow up at $x_0$, then the function $v$ is uniquely determined. By the Lebesgue differentiation theorem, the set $\mathcal{N}_u$ is $\mathcal{L}^d$-negligible. More precisely, for $\mathcal{L}^d$-almost every $x_0 \in \Omega \setminus \mathcal{N}_u$, the blow up of $u$ at $x_0$ is a constant function. In order to fix some terminology, we denote by $\S_u$ the complement in $\Omega$ of the set of Lebesgue points for $u$  
\begin{equation}
    \Omega \setminus \mathcal{S}_u : = \left\{ x_0 \in \Omega \colon \text{ $u$ has constant blow up $\bar{u} \in \R^m$ at $x_0$} \right\}.  
\end{equation} 
For any $x_0 \in \Omega\setminus \S_u$, the value $\bar{u}$ is known as the \emph{approximate limit} of $u$ at $x_0$. 

We consider another particularly relevant class of blow up points. We say that $x_0 \in \Omega \setminus \mathcal{N}_u$ is a jump point if there exist $u^+ \neq u^- \in \R^m$ and a direction $\nu \in \mathcal{S}^{d-1} $ such that 
\begin{equation}
    v = u^+ \mathds{1}_{B_1^{\nu_+}} + u^- \mathds{1}_{B_1^{\nu_-}}, \qquad \text{where we set } B_1^{\nu_\pm} =  \{ y \in B_1 \, : \,  \pm \langle y, \nu \rangle \geq 0 \}. 
\end{equation}
We denote by $\J_u$ the collection of jump points of $u$. For any $x_0 \in \mathcal{J}_u$, we say that $u^\pm$ are the \emph{one-sided Lebesgue limits} of $u$ at $x_0$ with respect to $\nu$. Moreover, the triple $(\nu, u^+, u^-)$ is uniquely determined, up to the change of sign of $\nu$ and the permutation of $u^+$ with $u^-$. 

Given $u \in L^1$, it can be proved that the set of points at which $u$ admits a non-constant blow up is $(d-1)$-rectifiable, that is $\J_u$ can be covered by countably many $(d-1)$-dimensional Lipschitz graphs. See \cite{DelNin21} and the references therein for further discussions. We also remark that for a general function $u \in L^1$ without additional properties, we have 
$$\J_u \cup (\Omega \setminus \S_u) \subset \Omega \setminus \mathcal{N}_u, $$
but the inclusion is strict. In the following sections, we recall a classification of blow up points for functions of bounded variation or bounded deformation fields. Sharp estimates of the size of the pathological set $\S_u \setminus \J_u$ will play a key role in our analysis. 

\begin{definition}
Given $u \in L^1(\Omega; \R^m)$, the \emph{precise representative} $\tilde u: \Omega \setminus \mathcal{N}_u \to \R^m$ of $u$ is defined by
\begin{equation} \label{eq: precise representative}
\tilde u(x) : = \fint_{B_1}v(z) \, dz, \qquad \text{where $v$ is the blow up of $u$ at $x$.} 
\end{equation} 
\end{definition}

As discussed above, the precise representative is well-defined $\mathcal{L}^d$-almost everywhere and agrees with $u$ at $\mathcal{L}^d$-almost every point. Moreover, $\tilde{u}$ satisfies 
\begin{equation} \label{eq: explicit formula precise representative}
    \tilde u (x) = \begin{cases}
        \bar{u} & x \in \Omega \setminus \S_u, 
        \\ \displaystyle \frac{u^+ + u^-}{2} & x \in \J_u. 
    \end{cases}
\end{equation}

We introduce the set of admissible convolution kernels
\begin{equation} \label{eq: kernels}
\mathcal{K} := \left\{ \rho \in C^\infty_c(B_1) \, : \, \int_{B_1} \rho(z) \, dz =1,  \rho \text{ even} \right\}. 
\end{equation}
Given $\rho \in \mathcal{K}$ and $\ell>0$, we denote by $\rho_\ell(x) = \ell^{-d} \rho \left( \sfrac{x}{\ell}\right)$. We adopt the standard notation for convolution, that is for any  $u \in L^1_{loc}$ we set $u_\ell = u* \rho_\ell$. The pointwise convergence of the convolutions is related to the existence of blow ups. 
 
\begin{proposition} \label{p: convergence to poinwise repr}
Let $u \in L^1(\R^d)$. For any $\rho \in \mathcal{K}$ and for any $x_0 \in \R^d \setminus \mathcal{N}_u$ it holds 
\begin{equation} \label{eq: u^rho}
    \lim_{\ell \to 0} u * \rho_\ell(x_0) = \int_{B_1} \rho(z) v(z) \, dz = :  u^\rho (x_0), \qquad \text{where $v$ is the blow up of $u$ at $x_0$. } 
\end{equation}
In particular, for any $x_0 \in (\R^d \setminus \S_u) \cup \J_u $ the pointwise limit $u^\rho(x_0)$ is independent of $\rho$ and we have 
\begin{equation}
    \lim_{\ell \to 0} u * \rho_\ell(x_0) = \tilde{u}(x_0). 
\end{equation}
\end{proposition}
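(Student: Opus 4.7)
The plan is to pass from convolution to an integral over the unit ball by rescaling, and then read off the pointwise behavior directly from the definition of blow up. Using that $\rho$ is even and the change of variables $y = \ell z$, I would first rewrite
\begin{equation*}
    u * \rho_\ell(x_0) = \int_{\R^d} u(x_0 - y) \rho_\ell(y) \, dy = \int_{\R^d} u(x_0 + y) \rho_\ell(y) \, dy = \int_{B_1} u(x_0 + \ell z) \rho(z) \, dz,
\end{equation*}
where the support condition $\rho \in C^\infty_c(B_1)$ truncates the integration domain. This expression is manifestly the right object to compare with $v$.

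Next, using that $\rho$ is bounded on $B_1$, the defining property \eqref{eq: blow up} of the blow up yields
\begin{equation*}
    \left| u * \rho_\ell(x_0) - \int_{B_1} \rho(z) v(z) \, dz \right| \leq \norm{\rho}_{L^\infty(B_1)} \int_{B_1} \abs{u(x_0 + \ell z) - v(z)} \, dz \xrightarrow{\ell \to 0} 0,
\end{equation*}
which establishes \eqref{eq: u^rho}. The essential point is that \cref{d: blow up} requires $\ell \to 0$ without passing to subsequences, so the limit is genuinely deterministic.

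For the second assertion, I split into the two cases according to the explicit description \eqref{eq: explicit formula precise representative} of $\tilde{u}$. If $x_0 \in \Omega \setminus \S_u$, then $v \equiv \bar{u}$ is constant and $u^\rho(x_0) = \bar{u} \int_{B_1} \rho(z)\, dz = \bar{u} = \tilde u(x_0)$ because $\rho \in \mathcal{K}$ has unit mass. If $x_0 \in \J_u$ with one-sided limits $u^\pm$ and direction $\nu$, then
\begin{equation*}
    u^\rho(x_0) = u^+ \int_{B_1^{\nu_+}} \rho(z)\, dz + u^- \int_{B_1^{\nu_-}} \rho(z)\, dz.
\end{equation*}
The change of variables $z \mapsto -z$ maps $B_1^{\nu_+}$ onto $B_1^{\nu_-}$ and, since $\rho$ is even, preserves $\rho(z)\, dz$; combined with $\int_{B_1} \rho = 1$ this forces each half-ball integral to equal $\sfrac{1}{2}$. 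Therefore $u^\rho(x_0) = (u^+ + u^-)/2 = \tilde u(x_0)$, independently of $\rho$.

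I do not anticipate a genuine obstacle here: the argument is essentially a change of variable plus the very definition of blow up. The only subtle ingredient is the use of evenness of $\rho$ in the jump case, which is precisely the reason one imposes the symmetry condition in the definition \eqref{eq: kernels} of the admissible class $\mathcal{K}$; without it the pointwise limit on $\J_u$ would depend on $\rho$ and no canonical representative would emerge.
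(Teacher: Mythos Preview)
Your proof is correct and follows essentially the same approach as the paper: rewrite the convolution as an integral over $B_1$ via rescaling, then estimate the difference by $\norm{\rho}_{L^\infty}$ times the blow-up integral from \cref{d: blow up}. You are in fact slightly more careful than the paper, both in making explicit the use of evenness of $\rho$ to pass from $u(x_0 - y)$ to $u(x_0 + y)$, and in spelling out the second assertion (the paper leaves the computation of $u^\rho$ on $(\R^d \setminus \S_u) \cup \J_u$ implicit).
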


\begin{proof}
Fix $x_0 \in \R^d \setminus \mathcal{N}_u$ and denote by $v$ the blow of $u$ at $x_0$. For any $\rho \in \mathcal{K}$, we have 
\begin{align}
    \abs{ u* \rho_\ell (x_0) - \int_{ B_1} \rho(z) v(z) \, dz} & \leq \int_{B_1} \abs{u(x_0 -\ell z) - v(z)} \abs{\rho(z)} \, dz. 
\end{align}
Since $\rho $ is bounded and \eqref{eq: blow up} holds, the latter vanishes in the limit $\ell \to 0$. 
\end{proof}

We will often make use of the following lemma. 

\begin{lemma} \label{L: double mollification} 
Let $u: \R^d \to \R$ be a bounded Borel function and let $\lambda \in \mathcal{M}_x$. Assume that the set $\mathcal{N}_u$ is $\abs{\lambda}$-negligible. Then, for any convolution kernel $\rho \in \mathcal{K}$ with support in $B_{\sfrac{1}{2}}$ we have $u_\ell \, \lambda_\ell \rightharpoonup u^{\rho*\rho} \, \lambda$ weakly as measures, where $u_\ell = u* \rho_\ell, \lambda_\ell = \lambda*\rho_\ell$ and $u^{\rho*\rho}$ are defined by \eqref{eq: u^rho}\footnote{Here, we restrict ourselves to kernels $\rho$ with support in $B_{\sfrac{1}{2}}$ so that $\rho * \rho \in \mathcal{K}$. This choice is made only for convenience.}.
\end{lemma}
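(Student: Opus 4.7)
The plan is to test $u_\ell \lambda_\ell$ against an arbitrary $\phi \in C_c(\R^d)$ and exploit the evenness of $\rho$ to trade a convolution on $\lambda_\ell$ for one on $u_\ell \phi$, producing a double-convolved kernel $\rho * \rho$ that falls within the hypotheses of \cref{p: convergence to poinwise repr}. The support assumption $\supp\rho \subset B_{\sfrac{1}{2}}$ is exactly what guarantees $\rho*\rho \in \mathcal{K}$.

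Concretely, first I would use Fubini and $\rho_\ell(x-y)=\rho_\ell(y-x)$ to write, for any $\phi \in C_c(\R^d)$,
\begin{equation*}
\int \phi \, u_\ell \, \lambda_\ell \, dx = \int \phi(x) u_\ell(x)\int \rho_\ell(y-x)\, d\lambda(y)\, dx = \int \bigl((\phi u_\ell) * \rho_\ell\bigr)(y) \, d\lambda(y).
\end{equation*}
The integrals are well-defined because $\phi$ is compactly supported, $u$ is bounded, and $\lambda$ is Radon. Next, I would peel off $\phi$ from $u_\ell$ by a modulus-of-continuity argument: splitting $\phi(x)=\phi(y)+(\phi(x)-\phi(y))$ in the integrand,
\begin{equation*}
\bigl((\phi u_\ell) * \rho_\ell\bigr)(y) = \phi(y)\bigl(u_\ell * \rho_\ell\bigr)(y) + E_\ell(y),
\qquad \abs{E_\ell(y)} \leq \|u\|_\infty\, \|\rho\|_{L^1}\, \omega_\phi(\ell),
\end{equation*}
where $\omega_\phi$ is the modulus of continuity of $\phi$. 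Thus $E_\ell \to 0$ uniformly, and its integral against the finite measure $|\lambda|\llcorner(\supp\phi+\bar B_1)$ vanishes as $\ell\to 0$.

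For the main term I would use the elementary identity $\rho_\ell * \rho_\ell = (\rho*\rho)_\ell$ to rewrite $u_\ell * \rho_\ell = u * (\rho*\rho)_\ell$. Since $\rho$ is even, smooth, compactly supported in $B_{\sfrac{1}{2}}$ with unit mass, the convolution $\rho*\rho$ is even, smooth, supported in $B_1$ with unit mass, hence $\rho*\rho \in \mathcal{K}$. \cref{p: convergence to poinwise repr} then yields
\begin{equation*}
\lim_{\ell \to 0} \bigl(u * (\rho*\rho)_\ell\bigr)(y) = u^{\rho*\rho}(y) \qquad \text{for every } y \in \R^d \setminus \mathcal{N}_u.
\end{equation*}
Since $\mathcal{N}_u$ is $|\lambda|$-negligible by hypothesis, this convergence holds $|\lambda|$-a.e.

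Finally, I would conclude by dominated convergence: the integrand $\phi(y)(u_\ell *\rho_\ell)(y)$ is uniformly bounded by $\|\phi\|_\infty \|u\|_\infty \|\rho\|_{L^1}^2$ and is supported in the fixed compact set $\supp\phi + \bar B_1$ (where $|\lambda|$ is finite), so
\begin{equation*}
\lim_{\ell\to 0}\int \phi \, u_\ell \, \lambda_\ell\, dx = \int \phi(y)\, u^{\rho*\rho}(y)\, d\lambda(y),
\end{equation*}
which is the claimed weak convergence. The main (modest) obstacle is really just the bookkeeping in step 1: recognizing that the evenness of $\rho$ converts the product of two convolutions into a single convolution by the admissible kernel $\rho*\rho$, which is precisely why the support condition $\supp\rho \subset B_{\sfrac{1}{2}}$ appears in the statement.
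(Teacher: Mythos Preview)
Your proof is correct and follows essentially the same approach as the paper: you use the evenness of $\rho$ to rewrite $\langle \phi, u_\ell \lambda_\ell\rangle$ as $\int (\phi u_\ell)_\ell\, d\lambda$, then split off $\phi$ by a modulus-of-continuity estimate, identify $(u_\ell)_\ell = u*(\rho*\rho)_\ell$, apply \cref{p: convergence to poinwise repr}, and conclude by dominated convergence. The only difference is cosmetic---you are slightly more explicit about the compact support needed for the domination---but the argument is the same.
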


\begin{proof}
Fix a kernel $\rho \in \mathcal{K}$ with support in $B_{\sfrac{1}{2}}$. By the properties of convolutions, for any test function $\phi \in C_c(\R^d)$ we have\footnote{Here, it is crucial that $\rho$ is even. \label{f: rho even}} 
\begin{equation}
    \langle \phi, u_\ell \, \lambda_\ell \rangle  = \int_{\R^d} (\phi u_\ell)_\ell \, d \lambda. 
\end{equation}
We claim that $(\phi u_\ell)_{\ell}(x) \to \phi(x) u^{\rho*\rho} (x)$ for $\abs{\lambda}$-almost every $x$. Then, the conclusion follows by the dominated convergence theorem. To check the claim, we split 
\begin{equation}
    (\phi u_\ell )_\ell = \phi \,  (u_\ell)_\ell + \left[ (\phi u_\ell)_\ell - \phi \, (u_\ell)_\ell \right]. 
\end{equation}
We notice that $(u_\ell)_\ell = u*\rho_\ell *\rho_\ell = u* (\rho * \rho)_\ell$. Since $\abs{\lambda}(\mathcal{N}_u) =0$ and $\rho*\rho \in \mathcal{K}$, by \cref{p: convergence to poinwise repr} we infer that $(u_\ell)_\ell(x) \to  u^{\rho * \rho} (x)$ for $\abs{\lambda}$-almost every $x$. Lastly, we have 
\begin{equation} \label{eq: double convolution}
    \norm{((\phi u_\ell)_\ell - \phi \, (u_\ell)_\ell }_{C^0} \leq \norm{u}_{L^\infty} \sup_{x} \int_{B_1} \abs{\phi(x-\ell y) - \phi(x)} \abs{\rho(y)} \, dy
\end{equation}
The latter vanishes in the limit $\ell \to 0$ since $\phi$ is uniformly continuous. 
\end{proof}

\subsection{Functions of bounded variation} \label{ss: BV} 
We list some basic properties of $BV$ functions that will be needed throughout the manuscript. We refer to the monograph \cite{AFP00} for an extensive presentation on this topic. Given an open set $\Omega \subset \R^d$, we say that $u$ has \emph{bounded variation in $\Omega$} ($u \in BV(\Omega)$) if $u \in L^1(\Omega)$ and the distributional gradient $Du$ is a Radon measure on $\Omega$. $BV$ functions are characterized by the fact that the difference quotients are uniformly bounded in $L^1$.  More precisely, given $u \in L^1(\R^d)$ it holds\footnote{A similar characterization holds on general open sets. See also \cite{DRINV23} for more details.  \label{fn: local characterization}} 

\begin{equation} \label{eq: char of BV}
u \in BV(\R^d) \qquad \Longleftrightarrow \qquad \sup_{h \in \R^d \setminus \{0\} } \frac{\norm{u(\cdot + h) - u(\cdot)}_{L^1(\R^d)}}{\abs{h}} < \infty. 
\end{equation}

Functions of bounded variation may experience jumps on sets of codimension one. More precisely, unlike the $W^{1,1}$ case, the jump set $\J_u$ associated to $u \in BV$ may have positive $\mathcal{H}^{d-1}$ measure. We recall the following notation. Given $u \in BV(\Omega; \R^m)$, let $Du = D^a u + D^s u$ be the Radon--Nykodim decomposition of the gradient with respect to the Lebesgue measure, i.e. $D^a u \ll \mathcal{L}^{d}$ and $D^s u \perp \mathcal{L}^d$. The singular part is decomposed as follows 
$$D^s u = D^s u \llcorner (\Omega \setminus \J_u) + D^s u \llcorner \J_u = D^c u + D^j u, $$
where $D^c u$ is the \emph{Cantor part} and $D^j u$ is the \emph{jump part} of the gradient. We also denote by 
$$\tilde D u = D^a u + D^c u$$
the \emph{diffuse part} of the gradient. The following is part of the structure theorem for the gradient of $BV$ functions.

\begin{proposition} [\cite{AFP00}*{Lemma 3.76, Theorem 3.78}] \label{P: prop BV}
Let $u \in BV(\Omega)$. The following holds: 
\begin{enumerate}
    \item the set $\S_u \setminus \J_u$ is $\mathcal{H}^{d-1}$-negligible; 
    \item $\abs{D u} \ll \mathcal{H}^{d-1}$; 
    \item the jump part of the gradient satisfies\footnote{By the definition of $\J_u$, the tensor product $(u^+(x) - u^-(x))\otimes \nu(x)$ is well defined for $\mathcal{H}^{d-1}$-a.e. $x \in \mathcal{J}_u$. } 
    \begin{equation}
        D^j u = (u^+ - u^-) \otimes \nu \,    \mathcal{H}^{d-1} \llcorner \J_u. 
    \end{equation}
\end{enumerate}
\end{proposition}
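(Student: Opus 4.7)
The three items form the classical Federer--Vol'pert structure theorem for $BV$ functions; my plan is to establish them in the order $(2)\to(1)\to(3)$, since absolute continuity with respect to $\mathcal{H}^{d-1}$ sets up the measure-theoretic framework for the blow-up analysis, which in turn underlies the jump formula.

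\emph{Item (2).} To show $\abs{Du}\ll\mathcal{H}^{d-1}$, I would first verify that at $\abs{Du}$-a.e.\ point the upper $(d-1)$-density $\limsup_{r\to 0}\abs{Du}(B_r(x))/r^{d-1}$ is finite, since otherwise a Vitali-type covering of the bad set would contradict finiteness of $\abs{Du}$. The standard density--Hausdorff comparison lemma then concludes: given a Borel set $E$ with $\mathcal{H}^{d-1}(E)=0$, cover it by balls $B_{r_i}$ with $\sum_i r_i^{d-1}<\eta$ on which $\abs{Du}(B_{r_i})\lesssim r_i^{d-1}$, and send $\eta\to 0$.

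\emph{Item (1).} Fix $x_0\in\S_u$ of finite upper $(d-1)$-density. The rescalings $u_\ell(y)=u(x_0+\ell y)$ have uniformly bounded total variation on $B_1$ by the natural scaling of $Du$, so by $BV$ compactness a subsequence converges in $L^1(B_1)$ to some $v$. To identify $v$ I would invoke the $BV$ coarea formula for $u$ and analyse the blow-ups of the superlevel sets $\{u>t\}$: at $\mathcal{H}^{d-1}$-a.e. $x_0\in\S_u$ these are half-spaces (by De Giorgi's reduced-boundary theorem, applicable because the density of $\abs{Du}$ is finite), and assembling this information across countably many rational thresholds $t$ forces $v$ to be two-valued across a common hyperplane. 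Hence every subsequential blow-up is of jump type, and the resulting uniqueness gives $x_0\in\J_u$, so $\mathcal{H}^{d-1}(\S_u\setminus\J_u)=0$.

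\emph{Item (3).} With $\J_u$ now $(d-1)$-rectifiable and $D^j u=Du\llcorner\J_u$ (from items (1)--(2)), I would compute at an $\mathcal{H}^{d-1}$-typical $x_0\in\J_u$: the blow-up $v=u^+\mathds{1}_{B_1^{\nu_+}}+u^-\mathds{1}_{B_1^{\nu_-}}$ has distributional gradient $(u^+-u^-)\otimes\nu\,\mathcal{H}^{d-1}\llcorner\{y\cdot\nu=0\}$ by direct integration by parts. Besicovitch differentiation of the vector measure $D^j u$ with respect to $\mathcal{H}^{d-1}\llcorner\J_u$ then yields the stated pointwise formula.

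\emph{Main obstacle.} The technical heart is item (1): upgrading a sequentially extracted blow-up to a \emph{unique} blow-up of jump type at $\mathcal{H}^{d-1}$-a.e. point of $\S_u$. This requires delicately combining $BV$ compactness with the coarea formula and De Giorgi's blow-up theorem for sets of finite perimeter; items (2) and (3) are then largely measure-theoretic bookkeeping built on top of it.
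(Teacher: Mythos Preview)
The paper does not supply its own proof of this proposition: it is stated with a citation to \cite{AFP00}*{Lemma 3.76, Theorem 3.78} and no proof environment follows. Your sketch is a reasonable outline of the classical Federer--Vol'pert argument as presented in that reference, so there is nothing to compare against here.
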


\begin{remark} \label{R: N is Du negligible}
By the above proposition, we have $\abs{ D u } (\mathcal{N}_{u} ) =0 $ and $\abs{D u}$-almost every $x$ is either a Lebesgue point or a jump point for $u$. Hence, the precise representative $\tilde u$  satisfies \eqref{eq: explicit formula precise representative} $\abs{D u}$-almost everywhere. 
\end{remark}

We also recall the chain rule for $BV$ functions. We point out that the diffuse part satisfies the same chain rule as in the smooth setting, whereas the chain rule for the jump part is different. This fact will play an important role in the following sections. 

\begin{proposition} \cite{AFP00}*{Theorem 3.96} \label{p: BV chain rule}
Let $u \in L^\infty \cap BV(\Omega; \R^m)$ and let $f: \R^m \to \R^n$ be a $C^1$ function. Then $ f \circ u \in BV$ and the following chain rule holds 
\begin{equation}
    \tilde{D} (f \circ u) = \nabla f (\tilde u) \, \tilde D u, \qquad D^j (f \circ u) = (f(u^+) - f(u^-)) \otimes \nu \, \mathcal{H}^{d-1} \llcorner \J_u. 
\end{equation}
\end{proposition}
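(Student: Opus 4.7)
The plan is to establish membership $f \circ u \in BV$, then decompose $D(f \circ u)$ into diffuse and jump parts and identify each using the structure theorem \cref{P: prop BV}. Since $u \in L^\infty(\Omega; \R^m)$ takes values in some compact $K \subset \R^m$ and $f \in C^1$ is Lipschitz on $K$ with constant $L := \norm{\nabla f}_{L^\infty(K)}$, I would first use the pointwise inequality $\abs{f(u(x+h)) - f(u(x))} \leq L \abs{u(x+h) - u(x)}$ together with the difference-quotient characterization \eqref{eq: char of BV} (and its local analogue) to conclude $f \circ u \in BV_{\mathrm{loc}}(\Omega)$ with the measure estimate $\abs{D(f \circ u)} \leq L \abs{Du}$. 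In particular, $D(f \circ u)$ is absolutely continuous with respect to $\abs{Du}$, so its jump part is concentrated on $\J_u$ up to $\mathcal{H}^{d-1}$-negligible sets and its diffuse part on $\Omega \setminus \J_u$.

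For the jump part, at any $x_0 \in \J_u$ with triple $(\nu, u^+, u^-)$ the blow-up of $u$ is the step function $u^+ \mathds{1}_{B_1^{\nu_+}} + u^- \mathds{1}_{B_1^{\nu_-}}$; dominated convergence applied to \eqref{eq: blow up} (valid since $u$ is bounded and $f$ is continuous) transfers this to $f \circ u$, showing that $f \circ u$ admits the blow-up $f(u^+)\mathds{1}_{B_1^{\nu_+}} + f(u^-)\mathds{1}_{B_1^{\nu_-}}$ at $x_0$. Conversely, at any Lebesgue point of $u$ the function $f \circ u$ is itself a Lebesgue point, so $\J_{f \circ u} \subset \S_u$, and part (1) of \cref{P: prop BV} forces $\S_u \setminus \J_u$ to be $\mathcal{H}^{d-1}$-negligible. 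Part (3) of \cref{P: prop BV} applied to $f \circ u$ then yields $D^j(f \circ u) = (f(u^+) - f(u^-)) \otimes \nu \, \mathcal{H}^{d-1} \llcorner \J_u$ (the right-hand side vanishes automatically on $\J_u \setminus \J_{f \circ u}$).

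For the diffuse part I would split $\tilde D u = D^a u + D^c u$. The absolutely continuous piece follows from the classical approximate differentiability of $BV$ functions: at $\mathcal{L}^d$-a.e.~$x_0$ one has the approximate expansion $u(x_0 + h) = \tilde u(x_0) + \nabla u(x_0) h + o(\abs{h})$ with $\nabla u = dD^a u / d\mathcal{L}^d$, and composing with the first-order Taylor expansion of $f$ (justified by $C^1$ regularity and boundedness of $u$) yields approximate differentiability of $f \circ u$ at $x_0$ with gradient $\nabla f(\tilde u(x_0)) \nabla u(x_0)$, whence $D^a(f \circ u) = \nabla f(\tilde u) D^a u$. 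For the Cantor part I would mollify $u_\ell := u * \rho_\ell$, apply the smooth chain rule $D(f \circ u_\ell) = \nabla f(u_\ell) D u_\ell$, and pass to the limit against test functions supported in $\Omega \setminus \J_u$: the pointwise convergence $u_\ell \to \tilde u$ on $\Omega \setminus \mathcal{N}_u$ is guaranteed by \cref{p: convergence to poinwise repr} and \cref{R: N is Du negligible}, so identification of the weak limit of $\nabla f(u_\ell) D u_\ell$ as $\nabla f(\tilde u) \tilde D u$ reduces to a suitable strengthening of the convergence $Du_\ell \rightharpoonup Du$.

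The main obstacle is precisely this last step on $D^c u$: bare weak-$*$ convergence $Du_\ell \rightharpoonup Du$ is not enough to pass the nonlinear product $\nabla f(u_\ell) Du_\ell$ to the limit, because one factor converges only pointwise $\abs{Du}$-a.e.~while the other converges only weakly. The standard remedy is either to replace convolution by an area-strict smooth approximation of $u$ (so that $\abs{Du_\ell}(\Omega) \to \abs{Du}(\Omega)$ and the product passes to the limit via a Reshetnyak-type continuity result), or to invoke the Alberti rank-one structure of $D^c u$ to reduce the analysis to a one-dimensional $BV$ composition where the chain rule is elementary. Either route is classical; the former is lighter and more in the spirit of the soft analysis used in the rest of the paper.
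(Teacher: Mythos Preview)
The paper does not prove this proposition at all: it is stated as a known result, with a citation to \cite{AFP00}*{Theorem 3.96}, and no argument is given. There is therefore nothing to compare your proposal against on the paper's side.

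That said, your sketch is a reasonable outline of the classical proof. The reduction of the jump part via blow-ups and \cref{P: prop BV} is correct, and the absolutely continuous part via approximate differentiability is standard. You have also correctly isolated the genuine difficulty, namely the Cantor part, and the two remedies you name (area-strict approximation plus Reshetnyak continuity, or Alberti's rank-one theorem reducing to a one-dimensional computation) are indeed the two textbook routes. One small caveat: area-strict approximation by convolution is not automatic on a general open $\Omega$ (one typically works with compactly contained subdomains or uses a more careful smoothing), so if you pursue that route you should localize first. But since the paper simply imports the result from \cite{AFP00}, none of this is needed for the purposes of the present manuscript.
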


\subsection{Vector fields with bounded deformation} \label{ss: BD}

We recall some fundamental properties of $BD$ vector fields. We refer to \cites{ACDalM97, TS80} for a detailed exposition. The basic theory is rather similar to that of $BV$ functions, with few exceptions. Given an open set $\Omega \subset \R^d$, we say that $u$ has \emph{bounded deformation in $\Omega$} if $u \in L^1(\Omega)$ and the symmetric distributional gradient $Eu$ is a Radon measure on $\Omega$. $BD$ vector fields enjoy the following characterization in terms of \emph{longitudinal} difference quotients. Given $u \in L^1(\R^d)$, we have\footref{fn: local characterization}  
\begin{equation} \label{eq: char of BD}
    u \in BD(\R^d) \qquad \Longleftrightarrow \qquad \sup_{h \in \R^d \setminus \{0\}} \frac{\norm{\langle h , u(\cdot + h) - u(\cdot)\rangle}_{L^1(\R^d)}}{\abs{h}^2} < \infty.  
\end{equation}
See e.g. \cite{DRInvN24}*{Lemma 2.11} for further details. The following is part of the structure theorem for the symmetric gradient of $BD$ fields.   

\begin{proposition} \cite{ACDalM97}*{Theorem 6.1, Equation (4.2)} \label{P: prop BD}
Let $u \in BD(\Omega; \R^d)$. The following holds: 
\begin{enumerate}
    \item the set $\S_u \setminus \J_u$ is $\abs{E u}$-negligible\footnote{To the best of the author's knowledge, it is currently not known whether $\S_u \setminus \J_u$ is $\mathcal{H}^{d-1}$-negligible, as in the $BV$ setting.}; 
    \item the restriction of the symmetric gradient to $\J_u$ satisfies
    \begin{equation} \label{eq: symmetric gradient jump} 
        E u \llcorner \J_u = \frac{(u^+ - u^-) \otimes \nu + \nu \otimes (u^+ - u^-)}{2} \, \mathcal{H}^{d-1} \llcorner \J_u; 
    \end{equation}
    \item if $\div(u) = 0 $, then it holds 
    \begin{equation} \label{eq: div-free traces}
        u^+ \cdot \nu = u^- \cdot \nu \qquad \text{for $\mathcal{H}^{d-1}$ almost every $x \in \J_u$. }  
    \end{equation} 
\end{enumerate} 
\end{proposition}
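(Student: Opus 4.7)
The three parts are classical structural results, but can be approached by a unified blow--up analysis at $\abs{Eu}$--almost every point. For $\abs{Eu}$--almost every $x_0$, standard Besicovitch/differentiation arguments produce rescaled vector fields $v_r(y) := u(x_0 + ry)$ (after subtracting a rigid motion) whose distributional symmetric gradients $Ev_r = r^{-(d-1)} T_{x_0,r\sharp} Eu$ converge, up to a subsequence, to a tangent measure $\nu_0$ on $B_1$. The qualitative behavior of these tangent objects will control the fine structure.

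For part (2), I would argue directly from the definition of $\J_u$. At $x_0 \in \J_u$, the \emph{function} blow--up is $v = u^+\mathds{1}_{B_1^{\nu_+}} + u^- \mathds{1}_{B_1^{\nu_-}}$ by Definition~\ref{d: blow up}, and a straightforward integration by parts computes
\[
Ev = \frac{(u^+ - u^-)\otimes \nu + \nu \otimes (u^+ - u^-)}{2}\,\mathcal{H}^{d-1}\llcorner \{y\cdot \nu = 0\}.
\]
The point is then to pass from this pointwise statement on blow--ups to the global identity on $\J_u$: one verifies that the blow--ups of $Eu$ centered on points of $\J_u$ converge (in the tangent-measure sense) to $Ev$, and that $\J_u$ is a $(d-1)$--rectifiable set with approximate tangent hyperplane $\nu(x)^\perp$ at $\mathcal{H}^{d-1}$--a.e. point, which identifies $Eu\llcorner \J_u$ with the density formula above against $\mathcal{H}^{d-1}\llcorner \J_u$.

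For part (1), which I expect to be the \textbf{main obstacle}, the goal is to exclude the existence of ``Cantor-like'' approximate-discontinuity points outside $\J_u$. The tool is the rank--one property for symmetric derivatives of $BD$ fields: at $\abs{E^s u}$--a.e. $x$, the Radon--Nikodym derivative $dE^su/d\abs{E^s u}(x)$ is a symmetric matrix of the form $\tfrac12(a\otimes b + b\otimes a)$. Combining this with the tangent--measure analysis above, one shows that the only admissible tangent blow--ups of $u$ at points of $\S_u$ supporting positive $\abs{Eu}$--mass are of jump type, which forces $x\in\J_u$. I would not reprove this structure here; rather, I would cite \cite{ACDalM97} and extract the precise statement $\abs{Eu}(\S_u\setminus\J_u)=0$ as a corollary.

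Finally, part (3) follows at once from (2). Since $\div u = 0$ as a distribution and $\div u = \trace(Du) = \trace(Eu)$ as measures on $\Omega$, we have $\trace(Eu) \equiv 0$, hence in particular $\trace(Eu \llcorner \J_u) = 0$. Taking the trace of the tensor in (2) gives
\[
0 = \trace\!\left(\frac{(u^+-u^-)\otimes \nu + \nu \otimes (u^+-u^-)}{2}\right)\mathcal{H}^{d-1}\llcorner \J_u = \bigl((u^+-u^-)\cdot \nu\bigr)\,\mathcal{H}^{d-1}\llcorner \J_u,
\]
which yields $u^+\cdot \nu = u^-\cdot \nu$ at $\mathcal{H}^{d-1}$--a.e. point of $\J_u$, as claimed.
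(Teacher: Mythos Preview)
The paper does not actually prove this proposition: it is stated with a citation to \cite{ACDalM97} (Theorem~6.1 and Equation~(4.2)) for parts (1) and (2), and the subsequent remark observes that part (3) follows from (2) by taking the trace of the density in \eqref{eq: symmetric gradient jump}. Your argument for part (3) is exactly this trace computation, so on that point you match the paper verbatim.

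Your sketches for (1) and (2) go beyond what the paper provides, since the paper simply defers to the literature. One comment on your outline for (1): you invoke the rank--one property for the singular part of $Eu$ in $BD$. That result (De~Philippis--Rindler) postdates \cite{ACDalM97} by two decades and is far deeper than what is needed here; the original proof of $\abs{Eu}(\S_u\setminus\J_u)=0$ in \cite{ACDalM97} proceeds by one--dimensional slicing along coordinate directions and does not require any rank--one structure. So while your blow--up heuristic is not wrong in spirit, citing rank--one as the ``tool'' for (1) is both historically anachronistic and an unnecessary escalation; if you want to sketch the actual mechanism, the slicing argument is the right one to point to. For (2), your blow--up computation of $Ev$ at a jump point is correct, and the passage to the global identity via rectifiability of $\J_u$ and tangent--measure identification is the standard route.
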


\begin{remark}
The first statement of the above proposition is analogous to \cref{R: N is Du negligible} for $BV$ functions. Hence, $\abs{E u}$-almost every $x \in \Omega$ is either a Lebesgue point or a jump point for $u$ and the precise representative $\tilde u$ satisfies \eqref{eq: explicit formula precise representative} $\abs{E u}$-almost everywhere. The second statement concerns the characterization of the symmetric gradient on Lipschitz hypersurfaces of codimension one. The last statement follows by computing the trace of the symmetric tensor product defining the density in \eqref{eq: symmetric gradient jump}\footnote{This fact is a particular instance of a general property of measure divergence vector fields with strong traces on hypersurfaces. $BD$ vector fields admit bilateral traces on any Lipschitz hypersurface $\Sigma$ and the normal component of these traces is continuous across $\Sigma$ if and only if $\div(u)$ does not give mass to $\Sigma$. See e.g. \cite{DRInvN24} and the references therein.}.
\end{remark}

\section{Proofs} \label{s: proofs}

\subsection{A direct proof via the chain rule} \label{s: direct proof}

In this section, we discuss a direct proof of \cref{t:main intro 1} based only on the chain rule for $BV$ functions. 

\begin{lemma} \label{L: chain rule divergence}
Let $u \in L^\infty \cap BV(\Omega)$. With the notation of \cref{ss:jumps} and \cref{ss: BV}, the following identities hold in the sense of measures 
\begin{align} 
    \div(u \otimes u) & = (\tilde u \cdot D) \, u + \tilde{u} \div(u), \label{eq: chain rule 1}
    \\ D \frac{\abs{u}^2}{2} & = \tilde{u}^T  D u, \label{eq: chain rule 2}
    \\ \div\left( u \frac{\abs{u}^2}{2} \right) & = \tilde{u}\cdot D \frac{\abs{u}^2}{2} + \frac{\abs{\tilde u}^2}{2} \div(u) + \frac{1}{8} \abs{u^+-u^-}^2 \div^j (u). \label{eq: chain rule 3} 
\end{align}
\end{lemma}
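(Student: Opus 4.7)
The three identities should follow by applying the $BV$ chain rule (\cref{p: BV chain rule}) to suitable nonlinear $C^1$ maps and then matching the diffuse and jump parts separately. By \cref{R: N is Du negligible}, $\abs{Du}$-almost every point is either a Lebesgue point or a jump point, so the decomposition $Du = \tilde{D}u + D^j u$ and the explicit formulas $\tilde u = (u^+ + u^-)/2$ on $\J_u$ and $D^j u = (u^+-u^-)\otimes \nu\, \mathcal{H}^{d-1}\llcorner \J_u$ (\cref{P: prop BV}) are available. Since the diffuse part does not see $\J_u$, on $\Omega \setminus \J_u$ one may safely replace $\widetilde{|u|^2/2}$ by $|\tilde u|^2/2$; the discrepancy between these two precise representatives on $\J_u$ is where the extra jump term in \eqref{eq: chain rule 3} will come from.

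For \eqref{eq: chain rule 2} I would apply \cref{p: BV chain rule} with $f(w) = |w|^2/2$, so $\nabla f(\tilde u) = \tilde u^T$. The diffuse identity is immediate. For the jump part, factoring
\[
\tfrac{1}{2}\bigl(|u^+|^2 - |u^-|^2\bigr) = \tfrac{1}{2}(u^+ + u^-)\cdot (u^+ - u^-) = \tilde u \cdot (u^+ - u^-)
\]
gives $D^j(|u|^2/2) = \tilde u \cdot (u^+-u^-)\,\nu\,\mathcal{H}^{d-1}\llcorner \J_u = \tilde u^T D^j u$, as required.

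For \eqref{eq: chain rule 1} I would apply \cref{p: BV chain rule} componentwise to $F(w) = w\otimes w$, which, combined with the product structure $\partial_{w_k}(w_i w_j) = \delta_{ik} w_j + \delta_{jk} w_i$, yields the standard Leibniz rule for the diffuse part: $\tilde D_l(u_i u_j) = \tilde u_j \tilde D_l u_i + \tilde u_i \tilde D_l u_j$. Summing $\partial_j(u_i u_j)$ over $j$ produces exactly $(\tilde u \cdot \tilde D)u + \tilde u\,\widetilde{\div u}$. For the jump part I would verify the scalar identity
\[
(\tilde u \cdot \nu)(u_i^+ - u_i^-) + \tilde u_i (u^+-u^-)\cdot \nu = u_i^+(u^+ \cdot \nu) - u_i^-(u^-\cdot \nu),
\]
which is a short bilinear computation using $\tilde u = (u^++u^-)/2$; the right-hand side is precisely $\sum_j D^j(u_i u_j)\cdot \nu_j$.

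The main work is \eqref{eq: chain rule 3}, which I regard as the key step. I would apply the same scheme to $G(w) = w\,|w|^2/2$ (or equivalently the Leibniz rule combined with \eqref{eq: chain rule 2}). The diffuse part matches the RHS directly, using $\widetilde{|u|^2/2} = |\tilde u|^2/2$ off $\J_u$. For the jump part the left-hand side equals
\[
\tfrac{1}{2}\bigl[(u^+\cdot \nu)|u^+|^2 - (u^-\cdot \nu)|u^-|^2\bigr]\mathcal{H}^{d-1}\llcorner \J_u,
\]
while summing the contributions of $\tilde u \cdot D^j(|u|^2/2)$ and $\tfrac{|\tilde u|^2}{2}\div^j u$ from the naive chain rule falls short by the missing term. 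The key algebraic identity
\[
|u^+ + u^-|^2 + |u^+ - u^-|^2 = 2\bigl(|u^+|^2 + |u^-|^2\bigr)
\]
shows that adding $\tfrac{1}{8}|u^+-u^-|^2 \div^j u$ is exactly what is needed to reconstruct $|u^+|^2 + |u^-|^2$ in the correct coefficient, after which a short expansion matches the left-hand side. The main obstacle, then, is purely bookkeeping: keeping track of which precise representative is being used on $\J_u$ and recognizing that the jump chain rule for the cubic nonlinearity is \emph{not} the naive Leibniz-from-diffuse formula but carries the additional $\tfrac{1}{8}|u^+-u^-|^2 \div^j u$ correction — precisely the contribution that will make Burgers shocks dissipate in \cref{R: on the failure of the chain rule} but that vanishes under the Euler constraint $\div u = 0$.
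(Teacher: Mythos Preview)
Your proposal is correct and follows essentially the same route as the paper: both apply the $BV$ chain rule (\cref{p: BV chain rule}) so that the diffuse parts are automatic, and then verify the three jump identities by direct algebra using $\tilde u = (u^++u^-)/2$ and $D^j u = (u^+-u^-)\otimes\nu\,\mathcal{H}^{d-1}\llcorner \J_u$. The only cosmetic difference is in \eqref{eq: chain rule 3}: you organize the jump computation around the parallelogram identity to turn $\tfrac{|\tilde u|^2}{2}+\tfrac18|u^+-u^-|^2$ into $\tfrac14(|u^+|^2+|u^-|^2)$, whereas the paper simply expands both sides and checks that the discrepancy equals $\tfrac14|u^+-u^-|^2(u^+_n-u^-_n)$; these are the same computation written two ways.
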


\begin{proof}
We decompose the derivatives on the left hand side in the diffuse and jump parts, as described in \cref{ss: BV}. In virtue of \cref{p: BV chain rule}, the diffuse parts follow the chain rule for smooth functions, provided that we consider the precise representative $\tilde{u}$. Then, we only have to check the behaviour of the jump part. To begin, we study \eqref{eq: chain rule 1}. We denote by $u^\pm_n = u^{\pm}\cdot \nu$. Then, by \cref{P: prop BV} and \cref{p: BV chain rule} we have 
\begin{align}
    \div^j (u \otimes u) & = (u^+ u^+_n - u^- u^-_n) \, \mathcal{H}^{d-1}\llcorner \J_u 
    \\ & = \left[ ((u^+- u^-) \otimes \nu) \tilde{u} +  (u^+_n - u^-_n)\tilde{u}  \right] \mathcal{H}^{d-1}\llcorner \J_u
    \\ & = (\tilde{u} \cdot D^j)\, u + \tilde{u} \, \div^j(u),  
\end{align}
thus showing \eqref{eq: chain rule 1} at the level of the jump parts. Regarding \eqref{eq: chain rule 2}, we have 
\begin{align}
    D^j \frac{\abs{u}^2}{2} & = \left(\frac{\abs{u^+}^2}{2} - \frac{\abs{u^-}^2}{2}\right) \nu \, \mathcal{H}^{d-1}\llcorner \J_u = \tilde{u}^T [ (  u^+-u^-)\otimes \nu] \, \mathcal{H}^{d-1}\llcorner \J_u = \tilde{u}^T D^j u,   
\end{align}
thus proving \eqref{eq: chain rule 1} for the jump parts. Regarding \eqref{eq: chain rule 3}, the jump part of the left hand side reads as 
\begin{align}
    \div^j\left( u \abs{u}^2 \right) = \left( \abs{u^+}^2 u^+_n - \abs{u^-}^2 u^-_n \right) \mathcal{H}^{d-1}\llcorner \J_u.   
\end{align}
On the other hand, using \eqref{eq: chain rule 2} we have 
\begin{equation}
    \tilde{u} \cdot D^j \abs{u}^2 + \abs{\tilde u}^2 \div^j (u) = \left[ 2 \tilde{u}\otimes \tilde u : (u^+ - u^-) \otimes \nu + \abs{\tilde{u}}^2 (u^+_n - u^-_n) \right] \mathcal{H}^{d-1}\llcorner \J_u.  
\end{equation}
By a direct computation, it is readily checked that  
\begin{equation}
    \left( \abs{u^+}^2 u^+_n - \abs{u^-}^2 u^-_n \right) - \left[ 2 \tilde{u}\otimes \tilde u : (u^+ - u^-) \otimes \nu + \abs{\tilde{u}}^2 (u^+_n - u^-_n) \right] = \frac{\abs{u^+ - u^-}^2}{4} (u^+_n - u^-_n). 
\end{equation}
Then we also have \eqref{eq: chain rule 3} at the level of the jump parts. 
\end{proof}

We give a direct proof of \cref{t:main intro 1} and postpone further comments to \cref{R: on the failure of the chain rule}. 

\begin{proof}[Proof of \cref{t:main intro 1}] 
Using \eqref{eq: chain rule 1} for almost every time slice, we write the momentum equation as 
\begin{equation}
    \partial_t u + (\tilde{u} \cdot D) \, u + \nabla p = f.
\end{equation} 
Then, mollifying and multiplying by $u_\ell$, we find\footnote{By mollification estimates, it holds $\partial_t u_\ell \in L^\infty_{x,t}$. Hence, $u_\ell$ is Lipschitz continuous in space-time and $ 2 u_\ell \partial_t u_\ell = \partial_t \abs{u_\ell}^2$. \label{f: u_e is regular in time}} 
\begin{equation}
    \partial_t \frac{\abs{u_\ell}^2}{2} + ((\tilde{u} \cdot D) \, u)_\ell \cdot u_\ell + \div(u_\ell p_\ell) = f_\ell \cdot u_\ell. 
\end{equation}
By the properties of convolutions, we have 
$$\partial_t \frac{\abs{u_\ell}^2}{2} + \div (u_\ell p_\ell) - f_\ell \cdot u_\ell \rightharpoonup \partial_t \frac{\abs{u}^2}{2} + \div(up) - f\cdot u  \qquad \text{in } \mathcal{D}'_{x,t}. $$
Then, we manipulate the cubic term. We fix a generic time slice. By \cref{P: prop BV}, we have $\abs{D u_t}(\mathcal{N}_{u_t}) = 0$ and the measure $(\tilde u_t \cdot D) u_t$ is absolutely continuous with respect to $\abs{D u_t}$. Hence, by \cref{L: double mollification}, we deduce 
\begin{equation} \label{eq: cubic convergence}
    ((\tilde u_t \cdot D) \, u_t)_\ell \cdot (u_t)_\ell \rightharpoonup ((\tilde{u}_t \cdot D) \, u_t )\cdot \tilde{u}_t
\end{equation} 
weakly as measures in $\Omega$. Then, by \eqref{eq: chain rule 2} and \eqref{eq: chain rule 3}, for almost every time slice we write
\begin{equation}
    ((\tilde{u}_t \cdot D)\, u_t )\cdot \tilde{u}_t = \tilde{u}_t \otimes \tilde{u}_t : Du_t = \tilde{u}_t \cdot D \frac{\abs{u_t}^2}{2} = \div\left( u_t \frac{\abs{u_t}^2}{2} \right). 
\end{equation}
To conclude, by uniform $L^\infty$ bounds, we apply the dominated convergence theorem with respect to the time variable and we infer that 
\begin{equation}
    ((\tilde u\cdot D) \, u)_\ell \cdot u_\ell \rightharpoonup \div\left( u \frac{\abs{u}^2}{2}\right) \qquad \text{in } \mathcal{D}'_{x,t}. 
\end{equation}
This concludes the proof. 
\end{proof}

\begin{remark} \label{R: on the failure of the chain rule}
By \cref{L: chain rule divergence}, it is clear that the classical chain rule for $\div(u \otimes u)$ always holds in the $BV$ setting, but the chain rule for $\div(u \abs{u}^2)$ fails whenever $\div(u)$ has a non-trivial jump part. However, the formulas in \cref{L: chain rule divergence} also hold in the one-dimensional case, showing that 
\begin{equation}
    \tilde u \, \partial_x \frac{\abs{u}^2}{2} = \partial_x \frac{u^3}{3} - \frac{1}{12} \left[ u^+ - u^- \right]^3 \nu \, \mathcal{H}^0 \llcorner \mathcal{J}_u. 
\end{equation}
Then, working with space-time measures instead of fixing time slices, the chain rules established in \cref{L: chain rule divergence} can be used to show that for $ u \in L^\infty_{x,t} \cap BV_{x,t}$\footnote{This is equivalent to requiring that $u \in L^{\infty}_{x,t} \cap L^1_t BV_x$. } solving $1d$ Burgers it holds 
\begin{equation}
    \partial_t \frac{u^2}{2} + \partial_x \frac{u^3}{3} = \frac{1}{12} \left[ u^+ - u^- \right]^3 \nu_x \, \mathcal{H}^1 \llcorner \mathcal{J}_u,
\end{equation}
where $\nu = (\nu_t, \nu_x) \in \mathbb{S}^1$ is the normal vector to the jump set in space-time. This is consistent with shocks having non-trivial energy dissipation concentrated on the jump set (see e.g. \cite{Daf16}) and with the fact that continuous Burgers solutions do not dissipate energy \cite{Daf06}. 
\end{remark}

\subsection{Velocity fields with bounded deformation} \label{s: velocity BD}

This section aims to give a detailed and self-contained proof of \cref{thm: main BD}. Our analysis relies on the following integral computations.

\begin{lemma} \label{l: averaged convergence}
Let $u\in L^\infty(\R^d; \R^d)$ and $ x_0 \in \R^d \setminus (\S_u \setminus \J_u)$. Denote by $(\nu, u^+, u^-) \in \mathbb{S}^{d-1} \times \R^d \times \R^d$ the one-sided limits of $u$ with respect to $\nu$, with the convention that $u^+ = u^-$ and $\nu = e_d$ if $x_0 \in \R^d \setminus \S_u$. Fix a radial convolution kernel $\rho \in \mathcal{K}$. Then, for any $y \in B_1$ we have
\begin{equation} \label{eq: shifted blow up}
    \lim_{\ell \to 0} \int_{B_1} u( x_0 + \ell (y-z) ) \rho(z)\, dz = \left( \int_{B_1(y) \cap \R^d_+ } \rho(y-z) \, dz \right) u^+ + \left( \int_{B_1(y) \cap \R^d_- } \rho(y - z) \, dz \right) u^-,  
\end{equation}
where we set $\R^d_{\pm} = \{ \pm z \cdot e_d \geq 0 \}$. 
\end{lemma}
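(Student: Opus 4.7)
The plan is to change variables, reduce the statement to a passage to the limit in an integral involving the blow up of $u$ at $x_0$, and then use the hypothesis to identify that blow up explicitly.

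First, with the substitution $w = y - z$, for $y \in B_1$ we rewrite
\begin{equation}
I_\ell(y) := \int_{B_1} u(x_0 + \ell(y-z))\rho(z)\, dz = \int_{B_1(y)} u(x_0 + \ell w)\,\rho(y-w)\, dw,
\end{equation}
where $B_1(y)$ is the unit ball centered at $y$, and in particular $B_1(y) \subset B_2$. By hypothesis, $x_0 \in (\R^d\setminus \S_u) \cup \J_u$, so by \cref{d: blow up} $u$ admits a blow up $v$ at $x_0$. In the Lebesgue case $v \equiv \bar u$, and in the jump case $v = u^+ \mathds{1}_{B_1^{\nu_+}} + u^- \mathds{1}_{B_1^{\nu_-}}$; in either case $v$ extends naturally to a $0$-homogeneous function $\bar v: \R^d \to \R^d$ (bounded by $\|u\|_{L^\infty}$).

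The key observation is that $0$-homogeneity of $\bar v$ upgrades the blow up convergence from $L^1(B_1)$ to $L^1(B_R)$ for every $R > 0$: by the change of variables $z = R w / R$,
\begin{equation}
\int_{B_R} |u(x_0 + \ell w) - \bar v(w)|\, dw = R^d \int_{B_1}|u(x_0 + (\ell R) z) - \bar v(z)|\, dz \xrightarrow[\ell\to 0]{} 0.
\end{equation}
Since $\rho \in \mathcal{K}$ is bounded with $\supp \rho \subset B_1$, the function $w \mapsto \rho(y-w)$ is bounded with support in $B_1(y) \subset B_2$, so
\begin{equation}
\bigl| I_\ell(y) - \textstyle\int_{B_1(y)} \bar v(w)\rho(y-w)\, dw \bigr| \leq \|\rho\|_\infty \int_{B_2} |u(x_0 + \ell w) - \bar v(w)|\, dw \xrightarrow[\ell\to 0]{} 0.
\end{equation}

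To conclude, we use that $\rho$ is radial: rotating coordinates so that $\nu = e_d$ leaves $\rho$ invariant and puts $\bar v$ in the form $u^+ \mathds{1}_{\R^d_+} + u^- \mathds{1}_{\R^d_-}$. Splitting the integral over the two half-spaces yields exactly the right-hand side of \eqref{eq: shifted blow up}. The only nontrivial step is the domain upgrade of the blow up convergence from $B_1$ to $B_2$; this is precisely the point where the hypothesis $x_0 \in \R^d\setminus (\S_u\setminus \J_u)$ is used, since it is what guarantees that $\bar v$ is $0$-homogeneous and hence unchanged under the rescaling $z \mapsto Rz$.
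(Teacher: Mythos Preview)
Your proof is correct and follows essentially the same route as the paper's: change variables to integrate over $B_1(y)\subset B_2$, use the blow up convergence on $B_2$ (which you justify explicitly via $0$-homogeneity, while the paper leaves this implicit), and exploit the radiality of $\rho$ to reduce to $\nu=e_d$. The only blemish is the typo ``$z = Rw/R$'' in the scaling step; you mean $w = Rz$.
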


\begin{proof}
Without loss of generality, we may assume $x_0 =0$ and $\nu = e_d$\footnote{Since $\rho$ is radial, also the right hand side in \eqref{eq: shifted blow up} invariant under rotation.}. Then, it holds 
\begin{equation}
    \int_{B_1} u(\ell (y-z)) \rho(z) \, dz = \left( \int_{B_1(y) \cap \R^d_+} + \int_{B_1(y) \cap \R^d_-}\right) u(\ell z) \rho(y-z)\, dz.  
\end{equation}
We compute both terms separately. We have 
\begin{align}
    \abs{ \int_{B_1(y) \cap \R^d_+} u(\ell z) \rho(y-z) \, dz - \left( \int_{B_1(y) \cap \R^d_+ } \rho(y-z) \, dz \right) u^+  } & \leq \int_{B_1(y)\cap \R^d_+} \abs{u(\ell z) - u^+} \abs{\rho(y-z)} \, dz 
    \\ & \leq \norm{\rho}_{L^\infty} \int_{B_2^+} \abs{u(\ell z) - u^+}\, dz. 
\end{align}
The latter vanishes in the limit $\ell \to 0$. Similarly, we show that 
$$\lim_{\ell \to 0} \int_{B_1(y) \cap \R^d_-} u(\ell z) \rho(y-z) \, dz = \left( \int_{B_1(y) \cap \R^d_- } \rho(y-z) \, dz \right) u^-. $$
This concludes the proof. \end{proof}

\begin{proposition} \label{P: limit reynolds}
Let $\rho \in \mathcal{K}$ be a radial kernel with support in $B_{\sfrac{1}{2}}$. Under the assumptions of \cref{l: averaged convergence}, it holds 
\begin{equation}
\lim_{\ell \to 0} [(u \otimes u)_{\ell} -  u_\ell \otimes u_\ell ]_\ell (x_0) = \bar{\alpha} \, (u^+ - u^-) \otimes (u^+ - u^-),
\end{equation}
where we set 
\begin{equation} \label{eq: alpha constant}
    \alpha(y) : = \int_{B_1(y) \cap \R^d_+} \rho(y-z) \, dz, \qquad \bar{\alpha} : = \int_{B_1} \alpha(y) (1- \alpha(y)) \rho(y) \, dy. 
\end{equation}
\end{proposition}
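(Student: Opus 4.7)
The plan is a direct computation that reduces, via a change of variables, to applying \cref{l: averaged convergence} pointwise and then integrating against $\rho$. The key structural observation is that $(R^\ell)_\ell(x_0)$ rescales exactly so that the base point of the inner mollification moves inside the ball of radius $\ell$ around $x_0$, which is the regime governed by \cref{l: averaged convergence}.

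First, I would substitute $y = x_0 + \ell w$ in the outer convolution and use that $\rho$ is even to write
\begin{equation*}
    [R^\ell]_\ell(x_0) = \int_{B_1} \rho(w) R^\ell(x_0 + \ell w)\,dw.
\end{equation*}
Next, a second substitution shows
\begin{equation*}
    u_\ell(x_0 + \ell w) = \int_{B_1} u\bigl(x_0 + \ell(w-z)\bigr) \rho(z)\,dz,
\end{equation*}
which is precisely the object appearing in \cref{l: averaged convergence}, so $u_\ell(x_0 + \ell w) \to \alpha(w) u^+ + (1-\alpha(w))u^-$. Since $x_0 \in (\R^d \setminus \S_u)\cup \J_u$, the tensor field $u \otimes u$ shares the same jump structure at $x_0$ with one-sided limits $u^\pm \otimes u^\pm$, so the same lemma yields
\begin{equation*}
    (u\otimes u)_\ell(x_0 + \ell w) \to \alpha(w)\,u^+ \otimes u^+ + (1-\alpha(w))\,u^- \otimes u^-.
\end{equation*}

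Subtracting and expanding the rank-one tensor square gives the algebraic identity
\begin{equation*}
    \alpha\,v^+\otimes v^+ + (1-\alpha)\,v^-\otimes v^- - \bigl(\alpha v^+ + (1-\alpha) v^-\bigr)^{\otimes 2} = \alpha(1-\alpha)(v^+ - v^-)\otimes(v^+ - v^-),
\end{equation*}
with $v^\pm = u^\pm$ and $\alpha = \alpha(w)$. Therefore $R^\ell(x_0 + \ell w)$ converges pointwise in $w$ to $\alpha(w)(1-\alpha(w))(u^+-u^-)\otimes(u^+-u^-)$.

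Finally, since $u \in L^\infty$, the family $R^\ell$ is uniformly bounded, so dominated convergence applies to the outer integral against $\rho(w)\,dw$, producing exactly the constant $\bar\alpha$ as defined in \eqref{eq: alpha constant}. The only mildly delicate step is recognizing that the shifted mollification $u_\ell(x_0+\ell w)$ is still in the scope of \cref{l: averaged convergence} (rather than needing a lemma based at a different point); once that is in place, the rest is bookkeeping and the rank-one algebraic identity above.
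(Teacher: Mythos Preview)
Your argument is correct and is in fact somewhat cleaner than the paper's. The paper treats the two pieces of $R^\ell$ separately: for $((u\otimes u)_\ell)_\ell(x_0)$ it writes the double mollification as $(u\otimes u)*(\rho*\rho)_\ell$ and invokes \cref{p: convergence to poinwise repr} to obtain $\tfrac{1}{2}(u^+\otimes u^+ + u^-\otimes u^-)$; for $(u_\ell\otimes u_\ell)_\ell(x_0)$ it expands the triple integral, applies \cref{l: averaged convergence}, and then must establish and use the reflection identity $\alpha(y)=1-\alpha(-y)$ to reduce the resulting expression. You instead keep $R^\ell$ intact, apply \cref{l: averaged convergence} simultaneously to $u$ and to $u\otimes u$ at the shifted point $x_0+\ell w$, and collapse the difference pointwise in $w$ via the variance-type identity
\[
\alpha\,v^+\otimes v^+ + (1-\alpha)\,v^-\otimes v^- - \bigl(\alpha v^+ + (1-\alpha)v^-\bigr)^{\otimes 2}=\alpha(1-\alpha)(v^+-v^-)\otimes(v^+-v^-),
\]
which makes the rank-one structure immediate and entirely bypasses the reflection computation. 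The only cosmetic point is that \cref{l: averaged convergence} is stated for $\R^d$-valued maps while you apply it to the $\R^{d\times d}$-valued $u\otimes u$; the proof of the lemma is componentwise and carries over verbatim, so this is harmless.
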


\begin{proof}
Without loss of generality, we may assume $x_0=0$ and $\nu = e_d$. We study both terms separately. 

\textsc{\underline{The first term}:} Since $x_0 \in \R^d \setminus (\S_u \setminus \J_u)$ and $u \in L^\infty$, it is easy to check that $x_0$ is a Lebesgue point or a jump discontinuity for $u \otimes u$. More precisely, $u^+ \otimes u^+, u^-\otimes u^-$ are the one-sided limits of $u \otimes u$ with respect to $\nu$, with the convention that $u^+ \otimes u^+ = u^- \otimes u^- = \bar{u} \otimes \bar{u}$ whenever $x_0 \in \R^d \setminus \S_u$. In other words, with our notation, the precise representative of $u \otimes u$ satisfies 
\begin{equation}
    \widetilde{u \otimes u} = \frac{u^+ \otimes u^+ + u^- \otimes u^-}{2} \qquad \text{in } \R^d \setminus (\S_u \setminus \J_u). 
\end{equation}
By the properties of convolutions, we write 
\begin{equation}
    ((u \otimes u)_\ell )_{\ell} = (u \otimes u) * \eta_\ell \qquad \text{where } \eta = \rho *\rho.  
\end{equation}
Since $\eta$ is a radial kernel in $ \mathcal{K}$, by \cref{p: convergence to poinwise repr}, it follows 
\begin{equation}
    \lim_{\ell \to 0} [(u \otimes u) * \eta_\ell] (0) = \frac{u^+ \otimes u^+ + u^- \otimes u^-}{2}.  
\end{equation}

\textsc{\underline{The second term}:}
Since $\rho$ is even, we have 
\begin{align}
    [u_\ell \otimes u_\ell]_\ell(0) & = \int_{B_1} u_\ell(\ell y) \otimes u_{\ell}(\ell y) \rho(y) \, dy 
    \\ & = \int_{B_1} \left( \int_{B_1} u(\ell(y-z)) \rho(z) \, dz \right) \otimes \left( \int_{B_1} u(\ell (y-z)) \rho(z) \, dz  \right) \rho(y)\, dy.  
\end{align}
By \cref{l: averaged convergence} and the dominated convergence theorem (recall that $u \in L^\infty$), we infer that 
\begin{align}
    \lim_{\ell \to 0} [u_\ell \otimes u_\ell]_\ell (0) & = \int_{B_1} \left[ \alpha(y) u^+ + (1-\alpha(y)) u^- \right] \otimes \left[ \alpha(y) u^+ + (1-\alpha(y)) u^- \right] \rho(y) \, dy 
    \\ & = \int_{B_1} \left[\alpha^2 u^+ \otimes u^+ + (1-\alpha)^2 u^-\otimes u^- + \alpha(1-\alpha) (u^+\otimes u^- + u^-\otimes u^+) \right] \rho(y) \, dy. 
\end{align}
We claim that for any $y \in B_1$ we have
$$\alpha(y) = 1 - \alpha(-y). $$
Indeed, we have 
\begin{align}
    \alpha(y) + \alpha(-y) & = \int_{B_1(y)\cap \R^d_+} \rho(y-z) \, dz + \int_{B_1(-y) \cap \R^d_+} \rho(-y-z) \, dz 
    \\ & = \int_{B_1(y)\cap \R^d_+} \rho(y-z) \, dz + \int_{B_1(y) \cap \R^d_-} \rho(-y+z) \, dz 
    \\ & = \int_{B_1(y) \cap \R^d_+} \rho(y-z) \, dz + \int_{B_1(y) \cap \R^d_-} \rho(y-z) \, dz = 1. 
\end{align}
Therefore, splitting the integral in the contributions over $B_1^+$ and $B_1^-$ and using symmetries, we obtain 
\begin{align}
    \lim_{\ell \to 0} & (u_\ell \otimes u_\ell)_\ell (0)  = \int_{B_1^+}\left[  (\alpha^2 + (1-\alpha)^2 ) \left( u^+ \otimes u^+ + u^- \otimes u^- \right) + 2 \alpha (1-\alpha) \left( u^+ \otimes u^- + u^-\otimes u^+ \right) \right]  \rho(y) \, dy 
    \\ & = \frac{u^+ \otimes u^+ + u^-\otimes u^-}{2} + \left( \int_{B_1^+} 2 \alpha(1-\alpha) \rho \, dy \right) \left( u^+ \otimes u^- + u^- \otimes u^+ - u^+ \otimes u^+ - u^- \otimes u^- \right)
    \\ & = \frac{u^+ \otimes u^+ + u^-\otimes u^-}{2} - \left( \int_{B_1}  \alpha(1-\alpha) \rho \, dy \right) \left[ (u^+ - u^-)\otimes (u^+ - u^-) \right].  
\end{align}
This concludes the proof. 
\end{proof}

We discuss the proof of \cref{thm: main BD}. 

\begin{proof} [Proof of \cref{thm: main BD}] 
Fix a radial convolution kernel $\rho \in \mathcal{K}$ and a test function $\phi \in C^\infty_c(\Omega \times (0,T))$. Since $-R^\ell  = (u\otimes u)_\ell - u_\ell \otimes u_\ell$ is  symmetric matrix, by the Constantin--E--Titi approximation \eqref{eq: CET approx} and the properties of convolutions, we have 
\begin{align}
    \langle \D , \phi \rangle & = \lim_{\ell \to 0} \int_0^T \int_\Omega \phi R^\ell : (E u)_\ell \, dx \, dt
    \\ & = \lim_{\ell \to 0} \int_0^T \int_\Omega \left[ (\phi R^\ell)_\ell - \phi \, (R^\ell)_\ell \right] : \, d (E u_t) \, dt + \lim_{\ell \to 0} \int_0^T \int_\Omega \phi \, ( R^\ell )_\ell : \, d (E u_t) \, dt. 
\end{align} 
We study separately both terms. As shown in \eqref{eq: double convolution}, it can be checked that for any time slice $(\phi R^\ell)_\ell - \phi \, (R^\ell)_\ell$ converges uniformly to $0$. Since $\phi, R^\ell$ are uniformly bounded in $\Omega \times (0,T)$, by the dominated convergence theorem, we conclude that 
$$\lim_{\ell \to 0} \int_0^T \int_\Omega \left[ (\phi R^\ell)_\ell - \phi \, (R^\ell)_\ell \right] : \, d (E u_t) \, dt =0. $$
The second term requires more careful. By dominated convergence in time, it is enough to check that 
\begin{equation} \label{eq: reduction to time slice}
    \lim_{\ell \to 0} \int_\Omega \phi \, (R^\ell)_\ell : \, d(Eu_t) = 0 \qquad \text{ for $\mathcal{L}^1$-almost every $t \in (0,T)$.} 
\end{equation}
Hence, we fix a time slice such that $u_t \in L^\infty_x \cap BD_x$ and $\div(u_t) =0$. By \cref{P: limit reynolds}, we infer 
\begin{equation}
    \lim_{\ell \to 0} ((u\otimes u)_\ell - u_\ell \otimes u_\ell)_\ell  = \begin{cases}
        0 & x \in \Omega \setminus \mathcal{S}_{u_t}, 
        \\  \bar{\alpha} \, (u^+ - u^-)\otimes (u^+ - u^-) & x \in \mathcal{J}_{u_t},   
    \end{cases}
\end{equation}
where $\bar{\alpha}$ is the constant given by \eqref{eq: alpha constant}. Since $\abs{E u_t}(\mathcal{S}_{u_t} \setminus \mathcal{J}_{u_t}) = 0$ by \cref{P: prop BD}, by the dominated converge theorem and the characterization of the symmetric gradient on the jump set \eqref{eq: symmetric gradient jump}, we obtain 
\begin{align}
    - \lim_{\ell \to 0} \int_{\Omega} \phi \,(R^\ell)_\ell \, d (E u_t) & = \bar{\alpha} \int_{\J_{u_t}} \phi (u^+ - u^-) \otimes (u^+- u^-) : \, d ( E u_t) 
    \\ & = \bar{\alpha} \int_{\J_{u_t}}  \phi \, (u^+ - u^-)\otimes (u^+ - u^-) : \left[ \frac{(u^+ - u^-)\otimes \nu + \nu \otimes (u^+ - u^-)}{2} \right] \, d \mathcal{H}^{d-1} 
    \\ & = \bar{\alpha} \int_{\J_{u_t}} \phi \, \abs{u^+ - u^-}^2 \langle u^+ - u^- , \nu \rangle \, d \mathcal{H}^{d-1} =0,   
\end{align}
since $u_t$ is divergence-free and \eqref{eq: div-free traces} holds. This concludes the proof. 
\end{proof}

\subsection{Measure antisymmetric gradient} \label{s: vortex sheets}

In this section, we study the dissipation associated to bounded Euler solutions with measure antisymmetric gradient. We prove the following result. 

\begin{theorem} \label{t:main general}
Let $u \in L^\infty_{x,t}$ be a weak solution to \eqref{E} with $p,f \in L^1_{x,t}$. Assume that the spatial antisymmetric gradient $\Lambda u$ is in $L^1_t \mathcal{M}_x$ and the set $\mathcal{N}_{u_t}$ is $\abs{\Lambda u_t}$-negligible for almost every $t$. Then $\D \equiv 0$. 
\end{theorem}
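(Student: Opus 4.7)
The plan is to rigorously implement the heuristic \eqref{eq: modified euler} from \cref{ss: generalized vortex sheets}: rewrite the momentum equation as $\partial_t u + \nabla P = -(\Lambda u) u + f$ and kill the nonlinear term after testing against $u$ via the algebraic identity $(\Lambda u) : u \otimes u = 0$. Concretely I need (i) a kernel $\eta$ to make sense of the quadratic object $(\Lambda u) u$ as a Radon measure, and (ii) a second spatial mollification $\rho_\varepsilon$ to perform the energy computation on the regularized velocity.

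First I fix a radial $\rho \in \mathcal{K}$ supported in $B_{\sfrac{1}{2}}$ and set $\eta = \rho \ast \rho \in \mathcal{K}$. Using the hypothesis that $\mathcal{N}_{u_t}$ is $\abs{\Lambda u_t}$-negligible for a.e. $t$, \cref{L: double mollification} applied componentwise (with $\lambda = (\Lambda u_t)_{ij}$) lets me define, for a.e. $t$, the vector-valued measure
$$ [(\Lambda u_t) u_t]_i := \sum_j u_{t,j}^{\eta} \, (\Lambda u_t)_{ij}, $$
whose total variation is controlled by $\norm{u}_{L^\infty} \abs{\Lambda u_t}$, so $(\Lambda u) u \in L^1_t \mathcal{M}_x$. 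Next I pass to the limit $\ell \to 0$ in the smooth identity $\div(u_\ell \otimes u_\ell) = (\Lambda u_\ell) u_\ell + \nabla \frac{\abs{u_\ell}^2}{2}$ (valid for the divergence-free mollification $u_\ell = u \ast \rho_\ell$): the outer terms converge to $\div(u \otimes u)$ and $\nabla \frac{\abs{u}^2}{2}$ in $\mathcal{D}'$ by the uniform $L^\infty$ bound, and the middle term converges to $(\Lambda u) u$ by the weak-$\ast$ convergence just established, upgraded to $\mathcal{D}'_{x,t}$ by dominated convergence in time (the total mass is bounded by $\norm{u}_{L^\infty} \abs{\Lambda u_t}(K) \in L^1_t$). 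Substituting back into \eqref{E} produces the modified momentum equation
$$ \partial_t u + \nabla P = -(\Lambda u) u + f \qquad \text{in } \mathcal{D}'(\Omega \times (0,T)), \qquad P = \frac{\abs{u}^2}{2} + p. $$

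Then I spatially mollify this identity by $\rho_\varepsilon$. Bootstrapping from the equation one finds $\partial_t u_\varepsilon \in L^1_t L^\infty_x$, so $u_\varepsilon$ is a Lipschitz $L^\infty_x$-valued function of time and $\partial_t \frac{\abs{u_\varepsilon}^2}{2} = u_\varepsilon \cdot \partial_t u_\varepsilon$ classically. Taking the scalar product with $u_\varepsilon$ and using $\div(u_\varepsilon) = 0$ to write $u_\varepsilon \cdot \nabla P_\varepsilon = \div(u_\varepsilon P_\varepsilon)$ gives
$$ \partial_t \frac{\abs{u_\varepsilon}^2}{2} + \div(u_\varepsilon P_\varepsilon) = -\, u_\varepsilon \cdot ((\Lambda u) u)_\varepsilon + u_\varepsilon \cdot f_\varepsilon. $$
All terms but the middle one pass to the natural limits in $\mathcal{D}'$ by standard convergences. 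For the critical term I invoke \cref{L: double mollification} once more on each time slice, now with $\lambda = (\Lambda u_t) u_t$ (the hypothesis on $\mathcal{N}_{u_t}$ transfers since $\abs{\lambda} \ll \abs{\Lambda u_t}$), obtaining
$$ u_\varepsilon \cdot ((\Lambda u_t) u_t)_\varepsilon \rightharpoonup u_t^{\eta} \cdot (\Lambda u_t) u_t = \sum_{i,j} u_{t,i}^{\eta} \, u_{t,j}^{\eta} \, (\Lambda u_t)_{ij} = 0 $$
weakly as measures on $\Omega$, the last equality being the contraction of the antisymmetric matrix of measures $\Lambda u_t$ with the symmetric tensor $u_t^{\eta} \otimes u_t^{\eta}$. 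A final time-dominated convergence promotes this to convergence in $\mathcal{D}'_{x,t}$, which proves $\D \equiv 0$.

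The main obstacle, as foreshadowed in the introduction, lies entirely in the first step: assigning a Radon measure meaning to the bilinear object $(\Lambda u) u$ without any control on $Eu$ and with no regularity on $u$ beyond $L^\infty$. The hypothesis that $\mathcal{N}_{u_t}$ is $\abs{\Lambda u_t}$-negligible is precisely what legitimizes the ``point value'' $u^{\eta}$ at $\abs{\Lambda u_t}$-almost every point and turns \cref{L: double mollification} into a definition of the product; without it, the sequence $u_\ell (\Lambda u)_\ell$ would depend essentially on the convolution kernel and no weak-$\ast$ limit would exist. Once this definition is in place, the antisymmetric cancellation and the two-scale mollification are rigidly dictated by incompressibility and by the quadratic algebra of the Euler nonlinearity.
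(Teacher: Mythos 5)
Your proposal is correct and follows essentially the same route as the paper: the same two-step scheme of first defining $(\Lambda u)u^{\eta}$ via \cref{L: double mollification} with $\eta=\rho*\rho$ to obtain the modified momentum equation of \cref{p: rewriting the equation}, and then testing the re-mollified equation against $u_\varepsilon$ and killing the cubic term through the antisymmetric--symmetric contraction $\Lambda u_t : u_t^{\eta}\otimes u_t^{\eta}=0$, with dominated convergence in time at both stages. No gaps.
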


\begin{remark} \label{R: particular cases}
\cref{t:main general} applies to the following cases. 
\begin{enumerate}
    \item[i)] If $u \in L^\infty_t C^0_x$ with $\Lambda u \in L^1_t \mathcal{M}_x$, then the set $\mathcal{N}_{u_t}$ is empty. 
    \item[ii)] If $u \in L^\infty_{x,t} \cap L^1_t BV_x$, then the set $\mathcal{N}_{u_t}$ is $\abs{D u_t}$-negligible by \cref{R: N is Du negligible}, thus giving another proof of \cref{t:main intro 1}. 
    \item[iii)] If $u \in L^\infty_{x,t}$ with $\Lambda u \in L^1_{x,t}$, then the conclusion is trivial, since the set $\mathcal{N}_{u_t}$ is always $\mathcal{L}^d$-negligible. 
\end{enumerate}
\end{remark}

As mentioned in \cref{ss: generalized vortex sheets}, the proof of \cref{t:main general} is a consequence of the following manipulations of the momentum equation.

\begin{proposition} \label{p: rewriting the equation}
Fix a kernel $\rho \in \mathcal{K}$ with support in $B_{\sfrac{1}{2}}$. Let $\eta = \rho *\rho \in \mathcal{K}$ and let $u^\eta$ be defined by \eqref{eq: u^rho}. Under the assumptions of \cref{t:main general}, it holds 
\begin{equation} \label{eq: modified euler bis}
    \partial_t u + \nabla P + (\Lambda u)\, u^{\eta} = f \qquad \text{in} \quad \mathcal{D}'_{x,t}.   
\end{equation} 
\end{proposition}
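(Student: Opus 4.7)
The identity \eqref{eq: modified euler bis} comes from the vector calculus rearrangement valid for smooth divergence-free $u$,
$$\div(u\otimes u) = (u\cdot\nabla)u = \nabla\frac{\abs{u}^2}{2} + (\Lambda u) u,$$
which is just $u_j\partial_j u_i = \partial_i\frac{\abs{u}^2}{2} + (\partial_j u_i - \partial_i u_j)u_j$. The subtlety is that the product $(\Lambda u)u$ is not a priori well-defined as a distribution when $\Lambda u$ is only a Radon measure and $u$ only $L^\infty$; the pointwise representative $u^\eta$ from \eqref{eq: u^rho} is exactly what makes this product meaningful, via \cref{L: double mollification}. The plan is therefore to mollify the momentum equation in space, perform the algebraic rearrangement at the smooth level, and pass to the limit.

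\textbf{Regularization and rearrangement.} Set $u_\ell = u*\rho_\ell$, $p_\ell = p*\rho_\ell$, $f_\ell = f*\rho_\ell$, and let $R^\ell := (u\otimes u)_\ell - u_\ell\otimes u_\ell$. Convolving the momentum equation in $x$ yields $\partial_t u_\ell + \div(u\otimes u)_\ell + \nabla p_\ell = f_\ell$. Since $\div u_\ell = 0$, the smooth field $u_\ell$ satisfies $\div(u_\ell\otimes u_\ell) = \nabla\frac{\abs{u_\ell}^2}{2} + (\Lambda u)_\ell u_\ell$, so with $P_\ell := \frac{\abs{u_\ell}^2}{2} + p_\ell$ the equation becomes
$$\partial_t u_\ell + \nabla P_\ell + (\Lambda u)_\ell u_\ell + \div(R^\ell) = f_\ell.$$
Standard mollifier convergence (using $u\in L^\infty_{x,t}$ and $p,f\in L^1_{x,t}$) gives $\partial_t u_\ell \to \partial_t u$, $\nabla P_\ell \to \nabla P$ and $f_\ell \to f$ in $\mathcal{D}'_{x,t}$, while $R^\ell \to 0$ in $L^1_{loc}(\Omega\times(0,T))$ because $u_\ell \to u$ strongly in every $L^q_{loc}$ with $q<\infty$, hence $\div(R^\ell) \to 0$ in $\mathcal{D}'_{x,t}$ as well.

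\textbf{Identifying the limit of $(\Lambda u)_\ell u_\ell$.} This is where the standing hypothesis enters. The blow-up of a vector-valued function exists if and only if each component's blow-up exists, so $\mathcal{N}_{u_t} = \bigcup_{j=1}^d \mathcal{N}_{u_{t,j}}$; combined with $\abs{(\Lambda u_t)_{ij}} \leq \abs{\Lambda u_t}$, the assumption $\abs{\Lambda u_t}(\mathcal{N}_{u_t}) = 0$ transfers to $\abs{(\Lambda u_t)_{ij}}(\mathcal{N}_{u_{t,j}}) = 0$ for all $i,j$. Applying \cref{L: double mollification} componentwise to each pair $(u_{t,j}, (\Lambda u_t)_{ij})$ yields, for a.e.\ $t$,
$$(\Lambda u_t)_\ell\, u_{\ell,t} \rightharpoonup (\Lambda u_t)\, u_t^\eta$$
weakly as measures on $\Omega$. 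To upgrade this to convergence in $\mathcal{D}'_{x,t}$, I would test against $\phi \in C^\infty_c(\Omega\times(0,T))$ and apply dominated convergence in time: for each $t$ the pairing with $\phi(\cdot,t)$ is bounded by $\norm{\phi}_\infty\,\norm{u}_{L^\infty_{x,t}}\,\abs{\Lambda u_t}(K)$ for some fixed compact neighbourhood $K$ of the spatial projection of $\supp\phi$, and $t\mapsto \abs{\Lambda u_t}(K)$ is in $L^1_t$ thanks to $\Lambda u \in L^1_t\mathcal{M}_x$. Combining the four limits then delivers \eqref{eq: modified euler bis}.

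\textbf{Main obstacle.} The algebraic rearrangement is essentially trivial; the whole content lies in making sense of the product $(\Lambda u) u^\eta$ as a bona fide Radon measure and in showing that it coincides with the limit of the double-mollified quantity $(\Lambda u)_\ell u_\ell$. The negligibility condition $\abs{\Lambda u_t}(\mathcal{N}_{u_t}) = 0$ is the minimal structural assumption that rules out additional singular defect terms concentrated on the blow-up-pathological set and pins the limit to the prescribed form $(\Lambda u) u^\eta$.
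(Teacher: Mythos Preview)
Your proof is correct and follows essentially the same route as the paper: mollify the momentum equation, use the smooth identity $\div(u_\ell\otimes u_\ell)=\nabla\tfrac{|u_\ell|^2}{2}+(\Lambda u_\ell)u_\ell$, let $\ell\to 0$, and invoke \cref{L: double mollification} together with dominated convergence in time to identify the limit of $(\Lambda u)_\ell u_\ell$ as $(\Lambda u)u^\eta$. If anything, you are slightly more explicit than the paper in justifying $\div(R^\ell)\to 0$ and in spelling out the componentwise application of \cref{L: double mollification}.
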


\begin{proof}
Mollifying the momentum equation with respect to the spatial variable, we have 
\begin{equation}
    f_\ell = \partial_t u_\ell + \div(u_\ell \otimes u_\ell) + \nabla p_\ell - \div(R^\ell), \qquad \text{where } R^\ell = u_\ell \otimes u_\ell - (u\otimes u)_\ell. 
\end{equation}
Since $u_\ell$ is smooth in space and divergence-free, a direct computation shows that\footnote{To be precise, the divergence-free condition is only needed to write $ \div(u_\ell \otimes u_\ell) = (u_\ell \cdot \nabla) \, u_\ell$.} 
\begin{equation}
    \div(u_\ell \otimes u_\ell) = (u_\ell \cdot \nabla) \, u_\ell = \nabla \left( \frac{\abs{u_\ell}^2}{2}\right) + (\Lambda u_\ell) \, u_\ell. 
\end{equation}
Hence, we have 
\begin{equation}
    f_\ell = \partial_t u_\ell + \nabla \left( \frac{\abs{u_\ell}^2}{2} + p_\ell \right) + (\Lambda u_\ell) \, u_\ell - \div(R^\ell). 
\end{equation}
Then, letting $\ell \to 0$, we have
\begin{equation}
    f = \partial_t u + \nabla P + \lim_{\ell \to 0} (\Lambda u_\ell) \, u_\ell, 
\end{equation}
where the limits are taken in the sense of distributions. To conclude, we show that 
\begin{equation} \label{eq: space time distributional convergence}
    \lim_{\ell \to 0} (\Lambda u_\ell) \, u_\ell = (\Lambda u) \, u^\eta \qquad \text{in } \mathcal{D}'_{x,t}. 
\end{equation} 
Using \cref{L: double mollification} with $\lambda = \Lambda u_t$ and since $\abs{\Lambda u_t} (\mathcal{N}_{u_t}) =0 $ by assumption for almost every $t$, we have 
\begin{equation}
    (\Lambda u_t)_\ell \,  (u_t)_\ell \rightharpoonup (\Lambda u_t) \,  u_t^\eta \qquad \text{for almost every $t$.}
\end{equation}
Then, by uniform $L^\infty$ bounds, we apply the dominated convergence theorem with respect to the time variable and  \eqref{eq: space time distributional convergence} follows. 
\end{proof}

In conclusion, we discuss the proof of \cref{t:main general}.  

\begin{proof} [Proof of \cref{t:main general}]
We fix a convolution kernel $\rho \in \mathcal{K}$ with support in $B_{\sfrac{1}{2}}$. We let $\eta = \rho * \rho$ and we derive \eqref{eq: modified euler bis} accordingly. Then, we mollify the momentum equation with $\rho_\ell$ in the form of \eqref{eq: modified euler bis} and we multiply by $u_\ell = u*\rho_\ell$. Hence, since $u_\ell$ is divergence-free, we have\footref{f: u_e is regular in time}
\begin{equation}
    f_\ell \cdot u_\ell = u_\ell \cdot \partial_t u_\ell + u_\ell \cdot \nabla P_\ell + ((\Lambda u) \, u^\eta)_\ell \cdot u_\ell = \partial_{t} \frac{\abs{u_\ell}^2}{2} + \div(u_\ell P_\ell) + ((\Lambda u) \, u^\eta)_\ell \cdot u_\ell. 
\end{equation} 
Then, by the properties of convolutions, letting $\ell \to 0$ we find 
\begin{equation}
f \cdot u = \partial_t \frac{\abs{u}^2}{2} + \div(u P) + \lim_{\ell \to 0} ( (\Lambda u) \,  u^\eta )_\ell \cdot u_\ell,
\end{equation}
where the limit is taken in $\mathcal{D}'_{x,t}$. Using again \cref{L: double mollification} as in the proof of \cref{p: rewriting the equation}, for almost every $t$ we have 
$$( (\Lambda u_t) \, u^\eta_t)_\ell \cdot (u_t)_\ell \rightharpoonup ( (\Lambda u_t ) \, u^\eta_t ) \cdot u^\eta_t = \Lambda u_t : (u^\eta_t \otimes u^\eta_t) =0, $$
since $\Lambda u_t$ is an antisymmetric matrix-valued measure and $u^\eta_t \otimes u^\eta_t$ is a symmetric matrix-valued Borel function, which is well-defined $\abs{\Lambda u_t}$-almost everywhere by assumption. Then, using again dominated convergence in time, we conclude that $((\Lambda u) \, u^\eta)_\ell \cdot u_\ell \rightharpoonup ((\Lambda u) \, u^\eta) \cdot u^\eta  = 0 $ in $\mathcal{D}'_{x,t}$. 
\end{proof}

\begin{remark}
We point out that the proofs of \cref{p: rewriting the equation} and \cref{t:main general} can be easily adapted whenever $\omega \abs{u}^2 \in L^1_{x,t}$. For instance, in the two-dimensional case, $\omega \in L^\infty_t L^{\sfrac{3}{2}}_x$ implies $u \in L^\infty_t L^6_x$ by Calder\'on--Zygmund estimates and Sobolev embedding, thus $\omega \abs{u}^2 \in L^\infty_t L^1_{x}$. However, it is well-known that $\omega \in L^{\sfrac{3}{2}}_x$ gives $u \in W^{\sfrac{1}{3}, 3}_x$. Hence, $\D$ vanishes by the classical results \cites{CET94, DR00}. The argument outlined above provides an alternative proof of this fact. We leave the details to the interested reader.  
\end{remark}

\bibliographystyle{plain} 
\bibliography{biblio}

\end{document}